\newcommand{\IN}{\mathbb N}
\newcommand{\IZ}{\mathbb Z}
\newcommand{\XX}{\mathcal X}
\newtheorem{theorem}{Theorem}
\newtheorem{lemma}[theorem]{Lemma}
\theoremstyle{definition}
\title{The Golomb space is topologically rigid}
\author{Taras Banakh, Dario Spirito, S\l awomir Turek}
\address{T.Banakh: Ivan Franko National University of Lviv (Ukraine) and Jan Kochanowski University in Kielce (Poland)}
\email{t.o.banakh@gmail.com}
\address{D.Spirito: Dipartimento di Matematica e Fisica, Universit\`a degli Studi ``Roma Tre", Roma (Italy)}
\email{spirito@mat.uniroma3.it}
\address{S.Turek: Cardinal Stefan Wyszy\'nski University in Warsaw (Poland)}
\email{s.turek@uksw.edu.pl}
\subjclass[2010]{Primary: 11A99; 54G15}
\keywords{The Golomb topology, topologically rigid space}
\begin{document}
\begin{abstract} The {\em Golomb space} $\IN_\tau$ is the set $\IN$ of positive integers endowed with the topology $\tau$ generated by the base consisting of arithmetic progressions $\{a+bn:n\ge 0\}$ with coprime $a,b$. We prove that the Golomb space $\IN_\tau$ is topologically rigid in the sense that its homeomorphism group is trivial. This resolves a problem posed by the first author at {\tt Mathoverflow} in 2017.
\end{abstract}

\maketitle

\section{Introduction}

In the AMS Meeting announcement \cite{Brown} M.~Brown introduced an amusing topology $\tau$ on the set $\IN$ of positive integers turning it into a connected Hausdorff space. The topology $\tau$ is generated by the base consisting of arithmetic progressions $a+b\IN_0:=\{a+bn:n\in\IN_0\}$ with coprime parameters $a,b\in\IN$. Here by $\IN_0=\{0\}\cup\IN$ we denote the set of non-negative integer numbers. 

In \cite{SS} the topology $\tau$ is called the {\em relatively prime integer topology}. This topology was popularized by Solomon Golomb \cite{Golomb59}, \cite{Golomb61} who observed that the classical Dirichlet theorem on primes in arithmetic progressions is equivalent to the density of the set $\Pi$ of prime numbers in the topological space $(\IN,\tau)$. As a by-product of such popularization efforts, the topological space $\IN_\tau:=(\IN,\tau)$ is known in General Topology as the {\em Golomb space}, see \cite{Szcz}, \cite{Szcz13}. 

The topological structure of the Golomb space $\IN_\tau$ was studied by Banakh, Mioduszewski and Turek \cite{BMT} who proved that the space $\IN_\tau$ is not topologically homogeneous (by showing that $1$ is a fixed point of any homeomorphism of $\IN$). Motivated by this results, the authors of \cite{BMT} posed a problem of the topological rigidity of the Golomb space. This problem was also repeated by the first author at {\tt Mathoverflow} \cite{MO}. A topological space $X$ is defined to be {\em topologically rigid} if its homeomorphism group is trivial. 

The main result of this note is the following theorem answering the above problem.

\begin{theorem}\label{t:main} The Golomb space $\IN_\tau$ is topologically rigid.
\end{theorem}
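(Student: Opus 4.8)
The plan is to show that any homeomorphism $h$ of $\IN_\tau$ must be the identity by pinning down $h(n)$ for every $n$ using purely topological invariants. The starting point is the result of \cite{BMT} already cited: the point $1$ is topologically distinguished, so $h(1)=1$ for every homeomorphism $h$. The strategy is then to recover, in topological terms, the arithmetic structure that distinguishes the remaining points—specifically the prime numbers, the prime powers, and ultimately the full multiplicative structure of each integer—so that $h$ is forced to preserve it and hence fix each point individually.

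First I would look for a topological characterization of the prime numbers inside $\IN_\tau$. A natural candidate invariant is the local behavior of the closure operator: for a basic open progression $a+b\IN_0$ with $\gcd(a,b)=1$, its closure is controlled by which primes divide $b$, so the lattice of closures of neighborhoods of a point $n$ should encode the set of prime divisors of $n$. Concretely, I expect that a point $p$ is prime if and only if its minimal neighborhoods (the progressions $a+p\IN_0$ through $p$, or rather through points congruent to it) have a closure structure that cannot be decomposed, mirroring the fact that $p$ has a single prime divisor and no proper nontrivial factorization. Thus the first key step is: isolate a first-order topological property $P(n)$ such that $P(n)$ holds iff $n$ is prime; this forces $h$ to permute the set $\Pi$ of primes.

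Next I would bootstrap from primes to all integers by recovering the divisibility relation topologically. The crucial observation is that $\gcd$ and divisibility are visible through the closures of basic sets: $\overline{a+b\IN_0}=\{x\in\IN: \gcd(x,b)\text{ divides } \ldots\}$-type formulas (worked out precisely via the structure of $\tau$) let one read off, from the way closures of neighborhoods of $m$ and $n$ interact, whether $m$ and $n$ share a prime factor and with what multiplicity. Since a homeomorphism preserves closures, intersections, and the distinguished point $1$, it must preserve this reconstructed divisibility lattice. A point $n$ is then determined by its position in the divisibility lattice together with the (topologically recoverable) data of which primes divide it and to which powers; because $h$ fixes $1$ and permutes primes while preserving divisibility, the standard rigidity of the multiplicative monoid $(\IN,\cdot)$ forces $h$ to fix every prime and hence every integer, giving $h=\mathrm{id}$.

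The main obstacle I anticipate is the first step: extracting a genuinely topological (homeomorphism-invariant) characterization of primality and of the prime-power filtration, since the Golomb topology is coarse and non-Hausdorff-like in its closure behavior, so neighborhoods overlap heavily and the naive attempt to read off prime factorizations from neighborhood bases may fail to separate, say, $p$ from $p^2$ or $p$ from $pq$. I would expect the heart of the argument to be a careful analysis of the closures $\overline{a+b\IN_0}$ and of the family of points that can be separated from $n$ by open sets, proving a clean formula that exposes exactly the set of prime divisors of each point; once that separation lemma is in hand, the permutation-of-primes and divisibility-preservation steps should follow by comparatively routine arithmetic, and the conclusion $h=\mathrm{id}$ drops out from fixing $1$ together with the rigidity of unique factorization.
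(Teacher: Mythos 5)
There is a genuine gap, and it sits exactly where the real difficulty of the problem lies. Your plan is: show $h$ permutes the primes, show $h$ preserves prime-divisor sets and divisibility, and then conclude that ``the standard rigidity of the multiplicative monoid $(\IN,\cdot)$ forces $h$ to fix every prime and hence every integer.'' But the multiplicative monoid $(\IN,\cdot)$ is \emph{not} rigid: it is the free commutative monoid on the set of primes, so every permutation of $\Pi$ extends to an automorphism. Knowing that $h$ fixes $1$, permutes $\Pi$, satisfies $\Pi_{h(x)}=h(\Pi_x)$ and $h(x^\IN)=h(x)^\IN$ gives you no control over \emph{which} permutation of the primes $h$ induces. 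Indeed, all of these facts were already proved in \cite{BMT} (they are quoted here as Lemma~\ref{l:BMT}), and the rigidity question remained open precisely because they do not suffice. Your first step (a topological characterization of primality via closures of basic sets) reproduces known results; your last step is where a new idea is needed, and the one you supply is false.

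The paper's actual proof supplies two ingredients that your outline is missing. First, to pin down individual primes it attaches to each prime $p$ an order invariant: the poset $\XX_p$ of closures $\overline{a^\IN}$ taken in the $p$-adic topology on $\IN\setminus p\IN$, which is shown (Lemmas~\ref{l:DS}, \ref{l:refl}, \ref{l:isomor}) to be the zero-dimensional reflection of the Golomb subspace $\IN_\tau\setminus p\IN$ and hence a homeomorphism invariant. The order type of $\XX_p$ depends on $p$: $\XX_2$ contains an infinite antichain while $\XX_p\cong\mathcal D_p$ does not for odd $p$ (giving $h(2)=2$), and the bottom of the ${\uparrow}$-chain part of $\mathcal D_p$ detects whether $p-1$ is prime (giving $h(3)=3$). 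Second, and crucially, the induction that fixes a general $n$ is keyed to the factorization of $n-1$, not of $n$: one shows $h(1+p^k\IN_0)=1+h(p)^k\IN_0$ (Lemma~\ref{l:main-p}) and observes that $n$ is the unique point lying in $1+p^{\alpha_p}\IN_0$ but not $1+p^{\alpha_p+1}\IN_0$ for all $p$, where $\alpha_p=l_p(n-1)$; since all primes dividing $n-1$ are less than $n$ and hence already fixed, $h(n)=n$. This additive/congruence characterization is what breaks the symmetry that the purely multiplicative data cannot, and nothing in your proposal plays this role.
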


The proof of this theorem will be presented in Section~\ref{s:main} after some preparatory work made in Sections~\ref{s:2}, \ref{s:3}. The idea of the proof belongs to the second author who studied in \cite{DS} the rigidity properties of the Golomb topology on a Dedekind ring with removed zero, and established in \cite[Theorem 6.7]{DS} that the homeomorphism group of the Golomb topology on $\IZ\setminus\{0\}$ consists of two homeomorphisms. The proof of Theorem~\ref{t:main} is a modified (and simplified) version of the proof of Theorem 6.7 given in \cite{DS}. It should be mentioned that the Golomb topology on Dedekind rings with removed zero was  studied by Knopfmacher,  Porubsk\'y \cite{KP}, Clark, Lebowitz-Lockard, Pollack \cite{CLP}, and  Spirito \cite{DS}, \cite{DS2}.







\section{Preliminaries and notations}

In this section we fix some notation and recall some known results on the Golomb topology.  For a subset $A$ of a topological space $X$ by $\overline{A}$ we denote the closure of $A$ in $X$. 

A {\em poset} is a set $X$  endowed with a partial order $\le$. A subset $L$ of a partially oredered set $(X,\le)$ is called
\begin{itemize}
\item {\em linearly ordered} (or else  a {\em chain}) if any points $x,y\in L$ are {\em comparable} in the sense that $x\le y$ or $y\le x$;
\item an {\em antichain} if any two distinct elements $x,y\in A$ are not comparable.
\end{itemize} 


By $\Pi$ we denote the set of prime numbers. For a number $x\in\IN$  we denote by $\Pi_x$ the set of all prime divisors of $x$. Two numbers $x,y\in\IN$ are {\em coprime} iff $\Pi_x\cap\Pi_y=\emptyset$. For a number $x\in\IN$ let $x^\IN:=\{x^n:n\in\IN\}$ be the set of all  powers of $x$.







For a number $x\in\IN$ and a prime number $p$ let $l_p(x)$ be the largest integer number such that $p^{l_p(x)}$ divides $x$. The function $l_p(x)$ plays the role of logarithm with base $p$.

The following formula for the closures of basic open sets in the Golomb topology was established in \cite[2.2]{BMT}.

\begin{lemma}[Banakh, Mioduszewski, Turek]\label{basic} For any $a,b\in\IN$ 
$$\overline{a+b\IN_0}=\IN\cap\bigcap_{p\in\Pi_b}\big(p\IN\cup (a+p^{l_p(b)}\IZ)\big).$$
\end{lemma}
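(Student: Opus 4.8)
The plan is to compute the closure directly from the definition, using the fact that the sets $x+d\IN_0$ with $d$ coprime to $x$ form a neighborhood base at each point $x\in\IN$. Indeed, if $x$ lies in a basic open set $a'+b'\IN_0$ with coprime $a',b'$, then $x\equiv a'\pmod{b'}$ forces $x$ and $b'$ to be coprime, so $x+b'\IN_0$ is itself a basic open neighborhood of $x$ contained in $a'+b'\IN_0$. Consequently $x\in\overline{a+b\IN_0}$ if and only if $(x+d\IN_0)\cap(a+b\IN_0)\ne\emptyset$ for every $d\in\IN$ coprime to $x$, and the whole task reduces to understanding this intersection condition.

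The heart of the argument is a clean criterion for when two progressions meet, namely
$$(x+d\IN_0)\cap(a+b\IN_0)\ne\emptyset\iff \gcd(b,d)\mid(x-a).$$
I would prove the forward implication by reducing an equation $x+dm=a+bn$ modulo $\gcd(b,d)$. For the converse, the divisibility $\gcd(b,d)\mid(x-a)$ guarantees an integer solution $(m_0,n_0)$ of $bn-dm=x-a$, and the general solution $m=m_0+t\,b/\gcd(b,d)$, $n=n_0+t\,d/\gcd(b,d)$ has both coordinates nonnegative for large $t$, yielding an actual point of the intersection. This is the only place where one uses that the progressions extend to infinity.

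Next I would reduce the quantifier over all $d$ coprime to $x$ to a quantifier over prime powers. Since $\gcd(b,d)=\prod_q q^{\min(l_q(b),l_q(d))}$ factors over primes into pairwise coprime parts, the condition ``$\gcd(b,d)\mid(x-a)$ for all $d$ coprime to $x$'' is equivalent to ``$q^{\min(l_q(b),k)}\mid(x-a)$ for every prime $q\nmid x$ and every $k\in\IN$''; the nontrivial direction here uses that each prime $q\mid\gcd(b,d)$ satisfies $q\mid d$, hence $q\nmid x$, so $q^{l_q(d)}$ is an admissible modulus. Letting $k\ge l_q(b)$ then shows this is in turn equivalent to requiring $q^{l_q(b)}\mid(x-a)$ for every prime $q\in\Pi_b$ with $q\nmid x$, while primes $q\notin\Pi_b$ impose nothing since $l_q(b)=0$.

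Finally, rephrasing ``for every $p\in\Pi_b$, either $p\mid x$ or $p^{l_p(b)}\mid(x-a)$'' as membership of $x$ in $p\IN\cup(a+p^{l_p(b)}\IZ)$ for each $p\in\Pi_b$ yields exactly the claimed formula. The main obstacle I anticipate is the careful bookkeeping in the prime-power reduction: one must verify both that a composite modulus $d$ imposes no condition beyond those coming from its prime-power divisors (using the pairwise coprimality of the factors of $\gcd(b,d)$) and that every relevant prime power $q^k$ is genuinely admissible as a basic neighborhood modulus, i.e. coprime to $x$, which holds precisely when $q\nmid x$. The remaining steps are routine elementary number theory.
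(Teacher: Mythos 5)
Your proposal is correct. Note that the paper offers no proof of this lemma at all --- it is imported verbatim from \cite{BMT} --- so there is no in-text argument to compare against; your derivation (the neighborhood base $x+d\IN_0$ with $\gcd(d,x)=1$, the Bezout criterion $(x+d\IN_0)\cap(a+b\IN_0)\ne\emptyset\iff\gcd(b,d)\mid(x-a)$ with the shift of the general solution into nonnegative parameters, and the localization at primes $q\nmid x$) is the natural self-contained route and handles the two delicate points correctly, namely that the lemma is asserted for arbitrary, not necessarily coprime, $a,b$ (your argument never uses coprimality of $a$ and $b$) and that only prime powers $q^k$ with $q\nmid x$ are admissible moduli for neighborhoods of $x$.
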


Also we shall heavily exploit the following lemma, proved in \cite[5.1]{BMT}.

\begin{lemma}[Banakh, Mioduszewski, Turek]\label{l:BMT} Each homeomorphism $h:\IN_\tau\to\IN_\tau$ of the Golomb space has the following properties:
\begin{enumerate}
\item $h(1)=1$;
\item $h(\Pi)=\Pi$;
\item $\Pi_{h(x)}=h(\Pi_x)$ for every $x\in\IN$;
\item $h(x^\IN)=h(x)^{\IN}$ for every $x\in\IN$.
\end{enumerate}
\end{lemma}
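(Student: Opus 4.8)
The four assertions will all follow by exhibiting, for each of them, a purely topological relation on $\IN_\tau$ and verifying through Lemma~\ref{basic} that it coincides with the intended arithmetic notion; since a homeomorphism commutes with the closure operator, it preserves every such relation, and the claims follow. I first record two consequences of Lemma~\ref{basic}. For every basic open set $b\IN\subseteq\overline{a+b\IN_0}$, because each prime $p\mid b$ divides every multiple of $b$. For a single prime $p$ one computes $\overline{a+p\IN_0}=\IN\cap\big(p\IN\cup(a+p\IZ)\big)$, so that $p\IN=\bigcap_{a=1}^{p-1}\overline{a+p\IN_0}$; thus each set $p\IN$ of multiples of $p$ is closed and is assembled from closures of basic open sets. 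The program is to characterise topologically the family $\{p\IN:p\in\Pi\}$ and the power sequences $x^\IN$, and then to read off (1)--(4).

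The technical heart, and the step I expect to be the main obstacle, is an intrinsic topological characterisation of the family $\{p\IN:p\in\Pi\}$ among the closed subsets of $\IN_\tau$ — equivalently, a topological description of divisibility. The arithmetic anchor is the presentation $p\IN=\bigcap_{a=1}^{p-1}\overline{a+p\IN_0}$ above, but the difficulty is to phrase the selection of these sets so that it refers only to the topology and not to the chosen progressions. Once this is achieved, a homeomorphism $h$ permutes the family and induces a permutation $\sigma$ of $\Pi$ with $h(p\IN)=\sigma(p)\IN$, and divisibility becomes topological: $p\mid x\iff x\in p\IN$. As an immediate dividend one obtains (1): since $1$ is the unique number lying in no member of the family (one checks $\{1\}=\bigcap_{p\in\Pi}\overline{1+p\IN_0}$, whereas every $x>1$ has a prime divisor), and $h$ preserves this family, the point $1$ is topologically distinguished and hence fixed. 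This recovers the non-homogeneity theorem behind Lemma~\ref{l:BMT}.

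Next I recover the power sequences. Within the characterised family, the prime powers of $p$ are exactly the elements lying in only one member, namely $p\IN$; this set equals $p^\IN$, so $h(p^\IN)=\sigma(p)^\IN$. For a general $x$, the sequence $x^\IN$ consists of the $y$ with $\Pi_y=\Pi_x$ such that $l_p(y)=n\,l_p(x)$ for a single $n$ and all $p\in\Pi_x$; since $l_p$ enters Lemma~\ref{basic} explicitly, the relation ``$y$ is a power of $x$'' can be expressed through preserved closures, giving assertion (4): $h(x^\IN)=h(x)^\IN$. In particular $h$ sends the generator of a power sequence to the generator of its image.

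Finally I assemble (2) and (3). The prime $p$ is the generator of the power sequence $p^\IN$; by (4) its image $h(p)$ is the generator of $h(p^\IN)=\sigma(p)^\IN$, which is the prime $\sigma(p)$. Hence $h(p)=\sigma(p)\in\Pi$ for every prime $p$, and applying the same to $h^{-1}$ gives $h(\Pi)=\Pi$, which is (2). For (3), combine divisibility-as-membership with $h(p)=\sigma(p)$: for every $x$ and every prime $p$ one has $p\in\Pi_x\iff x\in p\IN\iff h(x)\in\sigma(p)\IN\iff\sigma(p)\in\Pi_{h(x)}$, whence $\Pi_{h(x)}=\sigma(\Pi_x)=h(\Pi_x)$. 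The steps are carried out in this order — the characterisation of $\{p\IN\}$ (yielding (1) and divisibility), then of $x^\IN$ (yielding (4) and generators), then (2) and (3) — since each furnishes the distinguished objects required by the next.
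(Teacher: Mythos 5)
Your proposal is a programme rather than a proof, and the two steps on which everything rests are announced but never carried out. Note first that the paper itself does not prove Lemma~\ref{l:BMT}: it quotes it from \cite[5.1]{BMT}, so the benchmark is that cited proof, and the work your plan defers is precisely its content. (i) You call the intrinsic topological characterisation of the family $\{p\IN:p\in\Pi\}$ ``the technical heart'' and ``the main obstacle'', and then continue from ``Once this is achieved\dots''; nothing in the proposal achieves it. Lemma~\ref{basic} points in the wrong direction for this purpose: it expresses closures in terms of arithmetic data ($p\IN$, $l_p(b)$, residue classes), whereas what you need is the converse --- arithmetic notions defined purely from the topology, without reference to the chosen progressions. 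The fact that a formula for $\overline{a+b\IN_0}$ \emph{mentions} $l_p$ does not make $l_p$, divisibility, or the family $\{p\IN\}$ invariant under homeomorphisms. (ii) The same non sequitur is the entire justification of item (4): ``since $l_p$ enters Lemma~\ref{basic} explicitly, the relation `$y$ is a power of $x$' can be expressed through preserved closures'' is an assertion, not an argument. In \cite{BMT} both invariances require genuine work (an analysis of superconnectedness and of closures of open sets occupying a substantial part of that paper), and your items (1)--(4) all rest on them, so nothing in the lemma is actually established.

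There is also a concrete false statement: the anchor identity $p\IN=\bigcap_{a=1}^{p-1}\overline{a+p\IN_0}$ fails at $p=2$, since by Lemma~\ref{basic} we have $\overline{1+2\IN_0}=\IN\cap\bigl(2\IN\cup(1+2\IZ)\bigr)=\IN$; one must instead use, e.g., $2\IN=\overline{1+4\IN_0}\cap\overline{3+4\IN_0}$. The purely conditional parts of your reasoning are sound: granting that $h$ permutes $\{p\IN:p\in\Pi\}$, the identification of $\{1\}$ as the unique point covered by no member, of $p^\IN$ as the points lying in exactly one member, and the derivation of $h(p)=\sigma(p)$ (hence (2) and (3)) from (4) together with $h(p^\IN)=\sigma(p)^\IN$ are all correct. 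But since both pillars --- the topological definability of $\{p\IN\}$ and of the power relation --- are missing, the proposal does not constitute a proof of the lemma.
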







Let $p$ be a prime number and $k\in\IN$. Let $\IZ$ be the ring of integer numbers, $\IZ_{p^k}$ be the residue ring $\IZ/p^k\IZ$, and $\IZ_{p^k}^\times$ be the multiplicative group of invertible elements of the ring $\IZ_{p^k}$. It is well-known that $|\IZ_{p^k}^\times|=\phi(p^k)=p^{k-1}(p-1)$.
The structure of the group $\IZ_{p^k}^\times$ was described by Gauss in  \cite[art.52--56]{Gauss} (see also Theorems 2 and 2' in Chapter 4 of \cite{NT}).

\begin{lemma}[Gauss]\label{l:Gauss} Let $p$ be a prime number and $k\in\IN$. 
\begin{enumerate}
\item If $p$ is odd, then the group $\IZ^\times_{p^k}$ is cyclic;
\item If $p=2$ and $k\ge 2$, then the element $-1+2^k\IZ$ generates a two-element cyclic group $C_2$ in $\IZ^\times_{2^k}$, the element $5+2^{k}\IN$ generates a cyclic subgroup $C_{2^{k-2}}$ of order $2^{k-2}$ in $\IZ^\times_{2^k}$ such that $\IZ^\times_{2^k}=C_2\oplus C_{2^{k-2}}$.
\end{enumerate}
\end{lemma}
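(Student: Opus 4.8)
The plan is to prove this as the classical theorem on the existence and structure of primitive roots, handling the two parts by parallel arguments: in each case I reduce the structure of $\IZ^\times_{p^k}$ to explicit generators whose orders are computed by induction on $k$ via the binomial theorem.

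For part (1) I would start from the base case $k=1$. The group $\IZ^\times_{p}=\IZ_p\setminus\{0\}$ is a finite subgroup of the multiplicative group of the field $\IZ_p$, hence cyclic: in a field the polynomial $x^d-1$ has at most $d$ roots, so for every divisor $d$ of $p-1$ there are at most $d$ elements whose order divides $d$, and combining this with the identity $\sum_{d\mid n}\phi(d)=n$ forces the existence of an element $g$ of order $p-1$, i.e.\ a primitive root. To pass to $p^k$ I would use the lifting lemma: after replacing $g$ by $g+p$ if necessary (which alters $g^{p-1}$ by a term not divisible by $p^2$ since $p\nmid g$), one may assume $g^{p-1}=1+cp$ with $p\nmid c$. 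An induction on $k$ then shows $g^{p^{k-2}(p-1)}=1+c_kp^{k-1}$ with $p\nmid c_k$, the inductive step raising the previous identity to the $p$-th power and observing that, for odd $p$, all binomial terms beyond the linear one are divisible by $p^{k+1}$. Consequently the order of $g$ modulo $p^k$ equals $p^{k-1}(p-1)=\phi(p^k)=|\IZ^\times_{p^k}|$, so $g$ generates the whole group.

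For part (2), with $p=2$ and $k\ge 2$, the group ceases to be cyclic and I would instead exhibit the claimed decomposition directly by computing two orders. The element $-1+2^k\IZ$ evidently has order $2$. For the element $5+2^k\IZ$ I would prove by induction on $j$ that $5^{2^{j}}\equiv 1+2^{j+2}\pmod{2^{j+3}}$; taking $j=k-2$ gives $5^{2^{k-2}}\equiv 1\pmod{2^k}$, while $j=k-3$ gives $5^{2^{k-3}}\equiv 1+2^{k-1}\not\equiv 1\pmod{2^k}$, so $5+2^k\IZ$ generates a cyclic group $C_{2^{k-2}}$ (for $k=2$ this group is trivial and the statement reduces to $\IZ^\times_{4}=\{\pm1\}$). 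Since $5\equiv 1\pmod 4$, every power of $5$ is congruent to $1$ modulo $4$, whereas $-1\equiv 3\pmod 4$; hence $-1+2^k\IZ\notin C_{2^{k-2}}$ and the two cyclic subgroups intersect trivially. The product of their orders is $2\cdot 2^{k-2}=2^{k-1}=\phi(2^k)=|\IZ^\times_{2^k}|$, so $\IZ^\times_{2^k}=C_2\oplus C_{2^{k-2}}$.

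The main obstacle is the lifting induction, and with it the reason the two cases diverge: one must control the exact power of $p$ (resp.\ $2$) dividing $g^{p^{j}(p-1)}-1$ as one raises to successive $p$-th powers. For odd $p$ the binomial coefficient $\binom{p}{2}$ contributes an extra factor of $p$ that keeps the linear term dominant, preserving cyclicity; for $p=2$ no such factor is available, the quadratic contribution $\binom{2}{2}=1$ survives, and this is precisely what obstructs a single generator and forces the splitting $C_2\oplus C_{2^{k-2}}$. As this is a classical result of Gauss, I would present only this structural skeleton and refer to \cite{Gauss} or \cite{NT} for the routine divisibility computations.
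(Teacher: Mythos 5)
Your proof is correct, but note that the paper does not prove this lemma at all: it is stated as a classical result of Gauss and referred to \cite[art.52--56]{Gauss} and to Theorems 2 and 2$'$ in Chapter 4 of \cite{NT}. The argument you sketch --- cyclicity of $\IZ_p^\times$ via the root-counting/$\sum_{d\mid n}\phi(d)=n$ argument, lifting a primitive root with the binomial-theorem induction controlling the exact power of $p$ dividing $g^{p^j(p-1)}-1$, and for $p=2$ the computation $5^{2^j}\equiv 1+2^{j+2}\pmod{2^{j+3}}$ together with the observation that $-1\not\equiv 1\pmod 4$ forces $C_2\cap C_{2^{k-2}}=\{1\}$ --- is precisely the standard proof given in those references, so your write-up supplies the proof the paper deliberately omits rather than diverging from it.
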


\section{Golomb topology versus the $p$-adic topologies on $\IN$}\label{s:2}

Let $p$ be any prime number. Let us recall that the {\em $p$-adic} topology on $\IZ$ is generated by the base consisting of the sets $x+p^n\IZ$, where $x\in\IZ$ and $n\in\IN$. This topology induces the {\em $p$-adic} topology on the subset $\IN$ of $\IZ$. It is generated by the base consisting of the sets $x+p^n\IN_0$ where $x,n\in\IN$. The following lemma is a special case of Proposition 3.1 in \cite{DS}. 

\begin{lemma}\label{l:DS} For any clopen subset $\Omega$ of $\IN_\tau\setminus p\IN$, and any $x\in\Omega$, there exists $n\in\IN$ such that $x+p^n\IN_0\subset \Omega$.
\end{lemma}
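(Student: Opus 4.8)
The plan is to exploit the fact that, because $x\in\Omega\subseteq\IN\setminus p\IN$ is coprime to $p$, every set $x+p^n\IN_0$ is itself a basic open set of $\IN_\tau$ (its parameters $x$ and $p^n$ are coprime) and lies entirely in $Y:=\IN\setminus p\IN$, since each of its elements is $\equiv x\not\equiv 0\pmod p$. First I would use the openness of $\Omega$ in the subspace $Y$ to pick a basic Golomb neighbourhood $x+b_0\IN_0$ with $\gcd(x,b_0)=1$ and $(x+b_0\IN_0)\cap Y\subseteq\Omega$. Replacing $b_0$ by $pb_0$ if necessary, I may assume $p\mid b$; then $x+b\IN_0\subseteq Y$ and $x+b\IN_0\subseteq\Omega$. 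The concrete claim I would then prove is that $n:=l_p(b)\ge 1$ already works, i.e. $x+p^n\IN_0\subseteq\Omega$.

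I would argue by contradiction: suppose some $y\in x+p^n\IN_0$ satisfies $y\notin\Omega$. Then $y\equiv x\pmod{p^n}$, so $y\in Y$, and $y$ lies in the complement $F:=Y\setminus\Omega$, which is clopen as well. Applying openness of $F$ to the point $y$ exactly as above, I obtain $b_y$ with $p\mid b_y$, $\gcd(y,b_y)=1$, and $y+b_y\IN_0\subseteq F$. Now the closedness enters: since $x+b\IN_0\subseteq\Omega$ and $\Omega$ is closed in $Y$, monotonicity of closure gives $\overline{x+b\IN_0}\cap Y\subseteq\overline{\Omega}\cap Y=\Omega$, and symmetrically $\overline{y+b_y\IN_0}\cap Y\subseteq F$. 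As $\Omega\cap F=\emptyset$, it suffices to produce a single point $z\in Y$ lying in \emph{both} closures $\overline{x+b\IN_0}$ and $\overline{y+b_y\IN_0}$; its existence contradicts the disjointness.

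To build such a $z$ I would invoke the closure formula of Lemma~\ref{basic}, which translates $z\in\overline{x+b\IN_0}$ into the condition that for every $q\in\Pi_b$ one has $q\mid z$ or $z\equiv x\pmod{q^{l_q(b)}}$, and likewise for $y,b_y$. I would then solve these congruences by the Chinese Remainder Theorem: for every prime $q\ne p$ dividing $b$ or $b_y$ I simply require $q\mid z$, which discharges that prime's condition through its first alternative; at the prime $p$ I require $z\equiv y\pmod{p^{\max(l_p(b),l_p(b_y))}}$. This last congruence forces $z\equiv y\pmod{p^{l_p(b_y)}}$ and, since $y\equiv x\pmod{p^{l_p(b)}}$, also $z\equiv x\pmod{p^{l_p(b)}}$, so both $p$-conditions hold; moreover $p\nmid z$, so $z\in Y$. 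I expect the main conceptual obstacle to be precisely this prime $p$: the neighbourhoods' differences $b,b_y$ may involve primes other than $p$ about which $p$-adic proximity of $x$ and $y$ gives no information, and the congruences at $p$ must be checked to be simultaneously solvable. The resolution is the asymmetry built into the statement — we only need $z$ coprime to $p$, never to the other primes — which lets me absorb every prime $q\ne p$ by imposing $q\mid z$, while the compatibility at $p$ comes for free from $y\equiv x\pmod{p^{l_p(b)}}$.
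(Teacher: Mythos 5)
Your proposal is correct and follows essentially the same route as the paper's own proof: choose basic neighbourhoods of $x$ inside $\Omega$ and of $y$ inside the complement, then use the closure formula of Lemma~\ref{basic} together with the Chinese Remainder Theorem to manufacture a common point of the two closures lying outside $p\IN$, discharging every prime $q\ne p$ via the divisibility alternative and the prime $p$ via the congruence $y\equiv x\pmod{p^{l_p(b)}}$. The only difference is cosmetic (you absorb the $p$-power into $b$ rather than writing the neighbourhood as $x+p^kb\IN_0$ with $p\nmid b$, and you phrase the final contradiction via $\overline{\Omega}\cap Y=\Omega$ instead of $\overline{\Omega}\cap\overline{Y\setminus\Omega}\subseteq p\IN$), so no further comment is needed.
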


\begin{proof} Since the set $p\IN$ is closed in $\IN_\tau$, the set $\Omega$ is open in $\IN_\tau$ and hence $x+p^kb\IN_0\subset \Omega$ for some $k\in\IN$ and $b\in\IN$, which is coprime with $px$. We claim that $x+p^n\IN_0\subset \Omega$. To derive a contradiction, assume that $x+p^n\IN_0\setminus\Omega$ contains some number $y$. Since $\Omega$ is closed in $\IN_\tau\setminus p\IN$, there exist $m\ge n$ and $d\in\IN$ such that $d$ is coprime with $p$ and $y$, and $(y+p^md\IN_0)\cap\Omega=\emptyset$. It follows that $y+p^m\IN_0\subset (x+p^n\IN_0)+p^m\IN_0\subset x+p^n\IN_0$. Since $p\notin\Pi_b\cup\Pi_d$, we can apply the Chinese Remainder Theorem \cite[3.12]{J} and conclude that $\emptyset\ne (y+p^m\IN)\cap\bigcap_{q\in\Pi_b\cup\Pi_d}q\IN$.
Applying Lemma~\ref{basic} and taking into account that the set $\Omega$ is clopen in $\IN_\tau\setminus p\IN$, we conclude that
\begin{multline*}
\emptyset\ne (y+p^m\IN_0)\cap\Big(\bigcap_{q\in\Pi_b\cup\Pi_d}q\IN\Big)= (x+p^n\IN_0)\cap \Big(\bigcap_{q\in\Pi_b}q\IN\Big)\cap(y+p^m\IN_0)\cap\Big(\bigcap_{q\in\Pi_d}q\IN\Big)\subseteq\\
\overline{x+p^nb\IN_0}\cap\overline{y+p^nd\IN_0}\subset \overline{\Omega}\cap\overline{(\IN\setminus p\IN)\setminus\Omega)}\subset p\IN,
\end{multline*}
which is not possible as the sets $x+p^n\IN_0$ and $p\IN$ are disjoint. This contradiction shows that $x+p^n\IN_0\subset\Omega$.
\end{proof}

A subset of a topological space is {\em clopen} if it is closed and open. By the {\em zero-dimensional reflection} of a topological space $X$ we understand the space $X$ endowed with the topology generated by the base consisting of clopen subsets of the space $X$.

\begin{lemma}\label{l:refl} The $p$-adic topology on $\IN\setminus p\IN$ coincides with the zero-dimensional reflection of the subspace $\IN_\tau\setminus p\IN$ of the Golomb space $\IN_\tau$.
\end{lemma}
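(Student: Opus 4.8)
The plan is to prove the equality of topologies by establishing two inclusions. Recall that the zero-dimensional reflection of the subspace $\IN_\tau\setminus p\IN$ is, by definition, generated by the clopen subsets of $\IN_\tau\setminus p\IN$, so it suffices to show that this family of clopen sets generates exactly the $p$-adic topology on $\IN\setminus p\IN$. I would therefore check separately that every clopen subset of $\IN_\tau\setminus p\IN$ is $p$-adically open, and that the $p$-adic topology admits a base consisting of sets that are clopen in $\IN_\tau\setminus p\IN$.

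For the inclusion ``reflection $\subseteq$ $p$-adic'' I would invoke Lemma~\ref{l:DS} directly. Given a clopen set $\Omega\subseteq\IN_\tau\setminus p\IN$ and a point $x\in\Omega$, Lemma~\ref{l:DS} produces $n\in\IN$ with $x+p^n\IN_0\subseteq\Omega$; since $x+p^n\IN_0$ is a basic $p$-adic neighbourhood of $x$ lying inside $\IN\setminus p\IN$, this shows $\Omega$ is $p$-adically open. As the clopen sets form a base of the reflection, every reflection-open set is $p$-adically open, giving this inclusion. The real content here is already packaged in Lemma~\ref{l:DS}, so this direction is short.

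For the reverse inclusion I would show that each residue class $R:=\{m\in\IN:m\equiv x\pmod{p^n}\}$ with $p\nmid x$ is clopen in $\IN_\tau\setminus p\IN$. Choosing the minimal positive representative $x_0\in\{1,\dots,p^n\}$ of the class, one has $R=x_0+p^n\IN_0$, which is a basic open set of $\IN_\tau$ because $\gcd(x_0,p^n)=1$; hence $R$ is open in the subspace $\IN_\tau\setminus p\IN$. For closedness I would apply Lemma~\ref{basic}: since $\Pi_{p^n}=\{p\}$ and $l_p(p^n)=n$, it yields $\overline{x_0+p^n\IN_0}=\IN\cap\big(p\IN\cup(x_0+p^n\IZ)\big)=p\IN\cup R$, and intersecting with $\IN\setminus p\IN$ removes the $p\IN$ part, leaving exactly $R$. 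Thus $R$ is clopen in $\IN_\tau\setminus p\IN$.

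Finally I would observe that these residue classes $R$ (with $p\nmid x$) form a base of the $p$-adic topology on $\IN\setminus p\IN$: given a basic set $x+p^n\IN_0$ and a point $y$ in it, the class of $y$ modulo $p^m$ for $m$ large (so that $p^m\ge y$) has minimal positive representative $y$ itself, hence is contained in $x+p^n\IN_0$ and contains $y$. Consequently every $p$-adically open set is a union of the reflection-clopen sets $R$, so it is open in the reflection, completing the second inclusion and hence the proof. The main obstacle I anticipate is the bookkeeping in this last base argument together with the closure computation, namely ensuring that the chosen progression $x_0+p^n\IN_0$ captures the \emph{entire} residue class and that the spurious $p\IN$ term in Lemma~\ref{basic} disappears upon passing to the subspace $\IN\setminus p\IN$; the topological heart of the statement is otherwise concentrated in the already-available Lemma~\ref{l:DS}.
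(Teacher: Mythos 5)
Your proof is correct and follows essentially the same route as the paper: Lemma~\ref{l:DS} gives that clopen sets are $p$-adically open, and the reverse inclusion is obtained by checking that each residue class $x_0+p^n\IN_0$ with $p\nmid x_0$ is clopen in $\IN_\tau\setminus p\IN$. The only (immaterial) difference is that you establish closedness by computing the closure via Lemma~\ref{basic}, whereas the paper directly exhibits, for each $y$ outside the class, the disjoint Golomb neighbourhood $y+p^n\IN_0$.
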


\begin{proof} Lemma~\ref{l:DS} implies that the $p$-adic topology $\tau_p$ on $\IN\setminus p\IN$ is stronger than the topology $\zeta$ of zero-dimensional reflection on $\IN_\tau\setminus p\IN$. To see that the $\tau_p$ coincides with $\zeta$, it suffices to show that for every $x\in \IN\setminus p\IN$ and $n\in\IN$ the basic open set $\IN\cap (x+p^n\IZ)$ in the $p$-adic topology is clopen in the subspace topology of $\IN_\tau\setminus p\IN\subset\IN_\tau$. By the definition, the set $\IN\cap (x+p^n\IZ)$ is open in the Golomb topology. To see that it is closed in $\IN_\tau\setminus p\IN$, take any point $y\in (\IN\setminus p\IN)\setminus (x+p^n\IZ)$ and observe that the Golomb-open neighborhood $y+p^n\IN_0$ of $y$ is disjoint with the set $\IN\cap(x+p^n\IZ)$.
\end{proof} 

For every prime number $p$, consider the countable family $$\XX_p=\big\{\overline{a^\IN}:a\in \IN\setminus p\IN,\;a\ne 1\big\},$$
where the closure $\overline{a^\IN}$ is taken in the $p$-adic topology on $\IN\setminus p\IN$, which coincides with the topology of zero-dimensional reflection of the Golomb topology on $\IN\setminus p\IN$ according to Lemma~\ref{l:refl}. 

The family $\XX_p$ is endowed with the partial order $\le$ defined by $X\le Y$ iff $Y\subseteq X$. So, $\XX_p$ is a poset carrying the partial order of reverse inclusion.

\begin{lemma}\label{l:isomor} For any prime number $p$, any homeomorphism $h$ of the Golomb space $\IN_\tau$ induces an order isomorphism $$h:\XX_p\to\XX_{h(p)},\;\;h:\overline{a^\IN}\mapsto h(\overline{a^\IN})=\overline{h(a)^\IN}$$
of the posets $\XX_p$ and $\XX_{h(p)}$. 
\end{lemma}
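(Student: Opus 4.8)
The plan is to realize the claimed order isomorphism as the restriction of $h$ itself, after transporting the Golomb topology to the $p$-adic topology in which the family $\XX_p$ is defined, and to read the formula $h(\overline{a^\IN})=\overline{h(a)^\IN}$ as the statement that a homeomorphism commutes with closures. First I would check that $h$ carries $\IN\setminus p\IN$ bijectively onto $\IN\setminus h(p)\IN$: by Lemma~\ref{l:BMT}(2),(3), for every $x\in\IN$ we have $x\in p\IN$ iff $p\in\Pi_x$ iff $h(p)\in h(\Pi_x)=\Pi_{h(x)}$ iff $h(x)\in h(p)\IN$, so $h(p\IN)=h(p)\IN$ and hence $h(\IN\setminus p\IN)=\IN\setminus h(p)\IN$. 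Since $p\IN$ and $h(p)\IN$ are closed in $\IN_\tau$ and $h$ is a homeomorphism of the whole space, its restriction is a homeomorphism of the subspaces $\IN_\tau\setminus p\IN$ and $\IN_\tau\setminus h(p)\IN$.

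Next I would pass to zero-dimensional reflections. A homeomorphism carries clopen sets to clopen sets in both directions, so the restriction of $h$ is simultaneously a homeomorphism between the zero-dimensional reflections of $\IN_\tau\setminus p\IN$ and $\IN_\tau\setminus h(p)\IN$. By Lemma~\ref{l:refl} these reflections are precisely the $p$-adic topology on $\IN\setminus p\IN$ and the $h(p)$-adic topology on $\IN\setminus h(p)\IN$, so $h$ restricts to a homeomorphism between these two $p$-adic spaces. Because a homeomorphism commutes with the closure operator, for any $a\in\IN\setminus p\IN$ with $a\ne1$ (so that $a^\IN\subseteq\IN\setminus p\IN$, all powers of $a$ being coprime to $p$) we get $h(\overline{a^\IN})=\overline{h(a^\IN)}$, with the two closures taken in the respective $p$-adic topologies. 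By Lemma~\ref{l:BMT}(4), $h(a^\IN)=h(a)^\IN$; moreover $h(a)\in\IN\setminus h(p)\IN$ by the first step and $h(a)\ne1$ since $h(1)=1$ and $h$ is injective. Hence $h(\overline{a^\IN})=\overline{h(a)^\IN}\in\XX_{h(p)}$, which both establishes the asserted formula and shows the map sends $\XX_p$ into $\XX_{h(p)}$.

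It remains to see that $h\colon\XX_p\to\XX_{h(p)}$ is an order isomorphism. The orders are reverse inclusion and $h$ is a bijection of the underlying point sets, so for all admissible $a,b$ we have $\overline{b^\IN}\subseteq\overline{a^\IN}$ iff $h(\overline{b^\IN})\subseteq h(\overline{a^\IN})$, that is $\overline{h(b)^\IN}\subseteq\overline{h(a)^\IN}$; thus $h$ preserves and reflects $\le$. Applying the same construction to the homeomorphism $h^{-1}$, which satisfies $h^{-1}(h(p))=p$, yields the map $\overline{c^\IN}\mapsto\overline{h^{-1}(c)^\IN}$ from $\XX_{h(p)}$ to $\XX_p$; a direct check shows it is a two-sided inverse of $h$, so $h$ is bijective and the induced map is an order isomorphism.

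The only delicate point is the second step: one must confirm that the functorial passage to the zero-dimensional reflection is genuinely compatible with the homeomorphism $h$, and then invoke Lemma~\ref{l:refl} to replace each reflection by the actual $p$-adic topology in which the closures $\overline{a^\IN}$ defining $\XX_p$ are computed. Once this identification is secured, the formula and the order-isomorphism properties are purely formal consequences of $h$ being a homeomorphism together with property~(4) of Lemma~\ref{l:BMT}.
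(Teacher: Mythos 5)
Your proposal is correct and follows essentially the same route as the paper's own proof: establishing $h(p\IN)=h(p)\IN$ via Lemma~\ref{l:BMT}(2),(3), passing to the zero-dimensional reflections identified with the $p$-adic topologies by Lemma~\ref{l:refl}, and then using Lemma~\ref{l:BMT}(4) together with the fact that a homeomorphism commutes with closures. No gaps; the argument matches the paper step for step.
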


\begin{proof} By Lemma~\ref{l:BMT}, $h(1)=1$ and $h(p)$ is a prime number. First we show that $h(p\IN)=h(p)\IN$. Indeed, for any $x\in p\IN$ we have $p\in\Pi_x$ and by Lemma~\ref{l:BMT}, $h(p)\in h(\Pi_x)=\Pi_{h(x)}$ and hence $h(x)\in h(p)\IN$ and $h(p\IN)\subset h(p)\IN$. Applying the same argument to the homeomorphism $h^{-1}$, we obtain $h^{-1}(h(p)\IN)\subset p\IN$, which implies the desired equality $h(p\IN)=h(p)\IN$. The bijectivity of $h$ ensures that $h$ maps homeomorphically the space $\IN_\tau\setminus p\IN$ onto the space $\IN_\tau\setminus h(p)\IN$.

Then $h$ also is a homeomorphism of the spaces $\IN\setminus p\IN$ and $\IN_\tau\setminus h(p)\IN$ endowed with the zero-dimensional reflections of their subspace topologies inherited from the Golomb topology of $\IN_\tau$. By Lemma~\ref{l:refl}, these reflection topologies on $\IN\setminus p\IN$ and $\IN\setminus h(p)\IN$ coincide with the $p$-adic  and $h(p)$-adic topologies on $\IN\setminus p\IN$ and $\IN\setminus h(p)\IN$, respectively.

By Lemma~\ref{l:BMT}, for any $a\in\IN\setminus (\{1\}\cup p\IN)$ we have $$h(a)^\IN=h(a^\IN)\subseteq h(\IN\setminus  p\IN)=\IN\setminus h(p)\IN$$ and by the continuity of $h$ in the topologies of zero-dimensional reflections, we get $h(\overline{a^\IN})=\overline{h(a^\IN)}=\overline{h(a)^\IN}$. The same argument applies to the homeomorphism $h^{-1}$. This implies that $$h:\XX_p\to \XX_{h(p)},\;\;h:\overline{a^\IN}\mapsto h(\overline{a^\IN})=\overline{h(a)^\IN},$$
is a well-defined bijection. It is clear that this bijection preserves the inclusion order and hence it is an order isomorphism between the posets $\XX_p$ and $\XX_{h(p)}$. 
\end{proof}

\section{The order structure of the posets $\XX_p$}\label{s:3}

In this section, given a prime number $p$, we investigate the order-theoretic structure of the poset $\XX_p$. 

For every $n\in\IN$ denote by $\pi_n:\IN\to\IZ_{p^n}$ the homomorphism assigning to each number $x\in \IN$ the residue class $x+p^n\IZ$. Also for $n\le m$ let $$\pi_{m,n}:\IZ_{p^m}\to \IZ_{p^n}$$ be the ring homomorphism assigning to each residue class $x+p^m\IZ$ the residue class $x+p^n\IZ$. It is easy to see that $\pi_n=\pi_{m,n}\circ\pi_m$. Observe that the multiplicative group $\IZ_{p^n}^\times$ of invertible elements of the ring $\IZ_{p^n}$ coincides with the set $\IZ_{p^n}\setminus p\IZ_{p^n}$ and hence has cardinality $p^n-p^{n-1}=p^{n-1}(p-1)$.

First we establish the structure of the elements $\overline{a^\IN}$ of the family $\XX_p$.


\begin{lemma}\label{l:S5} If  for some $a\in \IN\setminus p\IZ$ and $n\in\IN$ the element $\pi_n(a)$ has order $\ge \max\{p,3\}$ in the multiplicative group $\IZ_{p^n}^\times$, then $\overline{a^\IN}=\pi_n^{-1}(\pi_n(a)^\IN)$.
\end{lemma}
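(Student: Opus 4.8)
The plan is to describe $\overline{a^\IN}$ arithmetically and then collapse that description to a single congruence using the hypothesis on the order.

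First I would record the topological meaning of the closure. Since the order of $\pi_n(a)$ is at least $3$, we have $a\ge 2$, and since $a$ is coprime to $p$, every basic $p$-adic neighbourhood of a point $x\in\IN\setminus p\IN$ has the form $x+p^m\IN_0$ and is contained in $\IN\setminus p\IN$. A power $a^k$ meets $x+p^m\IN_0$ exactly when $a^k\equiv x\pmod{p^m}$: the size restriction $a^k\ge x$ is automatic, because the residues $a^k\bmod p^m$ repeat periodically in $k$ while $a^k\to\infty$. As $\pi_m(a)$ is invertible, $\pi_m(a)^\IN$ is the cyclic subgroup of $\IZ_{p^m}^\times$ generated by $\pi_m(a)$, and this yields
$$\overline{a^\IN}=\bigcap_{m\in\IN}\pi_m^{-1}\big(\pi_m(a)^\IN\big).$$

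Next I would reduce the statement to a single fact about orders. The inclusion $\overline{a^\IN}\subseteq\pi_n^{-1}(\pi_n(a)^\IN)$ is just the term $m=n$. For the converse it suffices to prove $\pi_m^{-1}(\pi_m(a)^\IN)=\pi_n^{-1}(\pi_n(a)^\IN)$ for all $m\ge n$, because for $m<n$ the homomorphism $\pi_{n,m}$ carries $\pi_n(a)^\IN$ onto $\pi_m(a)^\IN$, so $\pi_m^{-1}(\pi_m(a)^\IN)\supseteq\pi_n^{-1}(\pi_n(a)^\IN)$ and these terms do not shrink the intersection. Fix $m\ge n$. The projection $\pi_{m,n}:\IZ_{p^m}^\times\to\IZ_{p^n}^\times$ is a surjective homomorphism whose kernel has order $p^{m-n}$, and it sends $\pi_m(a)^\IN$ onto $\pi_n(a)^\IN$; hence $\pi_m(a)^\IN\subseteq\pi_{m,n}^{-1}(\pi_n(a)^\IN)$ and the larger set has cardinality $|\pi_n(a)^\IN|\cdot p^{m-n}$. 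Thus the desired equality $\pi_m(a)^\IN=\pi_{m,n}^{-1}(\pi_n(a)^\IN)$, which is equivalent to $\pi_m^{-1}(\pi_m(a)^\IN)=\pi_n^{-1}(\pi_n(a)^\IN)$, holds as soon as the order $d_m$ of $\pi_m(a)$ satisfies $d_m=d_n\,p^{m-n}$, where $d_n$ is the order of $\pi_n(a)$.

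It remains to prove this order-lifting identity for all $m\ge n$ under the hypothesis $d_n\ge\max\{p,3\}$, and I expect this to be the main obstacle. For odd $p$ the group $\IZ_{p^k}^\times$ is cyclic by Lemma~\ref{l:Gauss}(1); writing $d=\mathrm{ord}_p(a)$ and $s=l_p(a^d-1)$, one has $\mathrm{ord}_{p^k}(a)=d$ for $k\le s$ and $\mathrm{ord}_{p^k}(a)=d\,p^{k-s}$ for $k\ge s$. The bound $d_n\ge p$ excludes the first regime, in which $d_n=d$ divides $p-1$, so $n>s$ and the multiplicative formula is immediate for every $m\ge n$. For $p=2$ the group is not cyclic, and here I would invoke the decomposition $\IZ_{2^k}^\times=\langle-1\rangle\oplus\langle5\rangle\cong C_2\oplus C_{2^{k-2}}$ from Lemma~\ref{l:Gauss}(2): since all orders are powers of $2$, the hypothesis $d_n\ge3$ means $d_n\ge4$, which forces the $\langle5\rangle$-component of $a$ to have order at least $4$ and thereby to govern the order of $a$ while the summand $\langle-1\rangle$ contributes nothing new; the same order-$4$ lower bound places that component past its own exponent-lifting threshold in $\langle5\rangle$, again giving $d_m=d_n\,2^{m-n}$. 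The delicate point, where the bound $\ge3$ is used in full strength, is exactly this interaction of the two cyclic summands: one must check that $\ge3$ simultaneously dominates the order-two summand $\langle-1\rangle$ and guarantees that $a$ has already passed the lifting threshold in the large cyclic summand $\langle5\rangle$.
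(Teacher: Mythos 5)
Your proof is correct, but the engine in the middle is genuinely different from the paper's. Both arguments start from the description $\overline{a^\IN}=\bigcap_{m\ge n}\pi_m^{-1}(\pi_m(a)^\IN)$ (which the paper simply asserts and you justify) and both must show that $\pi_m(a)^\IN$ fills up all of $H:=\pi_{m,n}^{-1}(\pi_n(a)^\IN)$ for every $m\ge n$. The paper does this group-theoretically: it first proves that $H$ is cyclic (automatic for odd $p$; for $p=2$ by a contradiction argument showing that a non-cyclic $H$ would contain the Boolean subgroup $\{\pm1,\pm1+2^{m-1}\}$ modulo $2^m$, which would force $-1$ to be a square modulo $4$), and then counts generators via the Euler totient, $\phi(|H|)=p^{m-n}\phi(|B|)$, to conclude that every lift of a generator of $B=\pi_n(a)^\IN$ --- in particular $\pi_m(a)$ --- generates $H$. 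You instead note that $\pi_m(a)^\IN\subseteq H$ with $|H|=d_np^{m-n}$, so equality is a pure cardinality statement $d_m=d_np^{m-n}$, and you verify this order-lifting identity from the structure theory: for odd $p$ via $\mathrm{ord}_{p^k}(a)=d\,p^{\max\{0,k-s\}}$ with $s=l_p(a^d-1)$, the hypothesis $d_n\ge p$ ruling out the stagnant regime $d_n=d\mid p-1$; for $p=2$ via the splitting $C_2\oplus C_{2^{k-2}}$, the hypothesis $d_n\ge 4$ forcing the $\langle 5\rangle$-component to dominate the $\langle -1\rangle$-component and to be past its doubling threshold. Your route buys the freedom never to prove $H$ cyclic and replaces the totient computation by order bookkeeping; the cost is that you import the odd-$p$ order-lifting formula as a known fact, and your $p=2$ case is only sketched --- to make it airtight, record that the exponents $t_k$ in $\pi_k(a)=(-1)^{\varepsilon}5^{t_k}$ are compatible under the projections $\pi_{m,n}$, so that $v:=l_2(t_k)$ stabilizes at a value $v\le n-4$ and $d_m=2^{m-2-v}=d_n2^{m-n}$ for all $m\ge n$. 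Both proofs use the hypothesis on the order in exactly the same two places.
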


\begin{proof}  Let $B=b^\IN$ be the cyclic group generated by the element $b=\pi_{n}(a)$ in the multiplicative group $\IZ_{p^n}^\times$. Since $|\IZ_{p^n}^\times|=p^{n-1}(p-1)$, and $b$ has order $\ge \max\{p,3\}$, the cardinality of the group $B$ is equal to $p^kd$ for some $k\in[1,n-1]$ and some divisor $d$ of the number $p-1$. Moreover, if $p=2$, then $2^k\ge3$ and hence $k\ge 2$ and $n\ge 3$. 

For any number $m\ge n$, consider the quotient homomorphism $\pi_{m,n}:\IZ_{p^m}\to\IZ_{p^n}$, $\pi_{m,n}:x+p^m\IZ\mapsto x+p^n\IZ$. We claim that the subgroup $H=\pi_{m,n}^{-1}(B)$ of the multiplicative group $\IZ_{p^m}^\times$ is cyclic. For odd $p$ this follows from the cyclicity of the group $\IZ_{p^n}^\times$, see Lemma~\ref{l:Gauss}.

For $p=2$, by  Lemma~\ref{l:Gauss}, the multiplicative group $\IZ_{2^m}^\times$ is isomorphic to the additive group $\IZ_2\times\IZ_{2^{m-2}}$. Assuming that $H$ is not cyclic, we conclude that $H$ contains the 4-element Boolean subgroup $$V=\{1+2^m\IZ,-1+2^m\IZ,1+2^{m-1}+2^m\IZ,-1+2^{m-1}+2^m\IZ\}$$ of $\IZ_{2^m}^\times$. Then $B=\pi_{m,n}(H)\supset \pi_{m,n}(V)\ni -1+2^n\IZ$. Taking into account that  $-1+2^n\IZ$ has order $2$ in the cyclic group $B$, we conclude that $-1+2^n\IZ=a^{2^{k-1}}+2^n\IZ$. Since $k\ge 2$, the odd number $c=a^{2^{k-2}}$ is well-defined and $c^2+4\IZ=a^{2^{k-1}}+4\IZ=-1+4\IZ$, which is not possible (as squares of odd numbers are equal  $1$ modulo $4$). This contradiction shows that the group $H$ is cyclic.

By \cite[1.5.5]{Rob}, the number of generators of the cyclic group $H$ can be calculated using the Euler totient function as
\begin{multline*}
\phi(|H|)=\phi(p^{m-n}|B|)=\phi(p^{m-n}p^kd)=\phi(p^{m-n+k})\phi(d)=p^{m-n+k-1}(p-1)\phi(d)=\\
=p^{m-n}\phi(p^k)\phi(d)=p^{n-k}\phi(p^kd)=p^{n-k}\phi(|B|),
\end{multline*}
which implies that for every generator $g$ of the group $B$, every element of the set $\pi_{m,n}^{-1}(g)$ is a generator of the group $H$. In particular, the element $\pi_m(a)\in\pi_{m,n}^{-1}(\pi_n(a))$ is a generator of the group $H$. By the definition of $p$-adic topology,
$$\overline{a^\IN}=\bigcap_{m\ge n}\pi_m^{-1}(\pi_m(a)^\IN)=\bigcap_{m\ge n}\pi_m^{-1}(\pi_{m,n}^{-1}(B))=\bigcap_{m\ge n}\pi_n^{-1}(B)=\pi_n^{-1}(B)=\pi_n^{-1}(\pi_n(a)^\IN).$$
\end{proof}

\begin{lemma}\label{l:S6}  
\begin{enumerate}
\item For any $X\in\XX_p$ there exists  $n\in\IN$ and a cyclic subgroup $H$ of the multiplicative group $\IZ^\times_{p^n}$ such that $X=\pi_n^{-1}(H)$.
\item For any $n\in\IN$ and cyclic subgroup $H$ of $\IZ_{p^n}^\times$ of order $|H|\ge \max\{p,3\}$, there exists a number $a\in \IN\setminus p\IN$ such that $\pi_n^{-1}(H)=\overline{a^\IN}\in\XX_p$.
\end{enumerate} 
\end{lemma}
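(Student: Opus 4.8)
The plan is to derive both statements directly from the structural description in Lemma~\ref{l:S5}, using as the only extra input an elementary fact about the growth of multiplicative orders: for every integer $a>1$ coprime with $p$, the order of $\pi_n(a)$ in $\IZ_{p^n}^\times$ tends to infinity as $n\to\infty$. I would prove this first. Since $a^d\equiv 1\pmod{p^{n+1}}$ implies $a^d\equiv 1\pmod{p^n}$, the order of $\pi_n(a)$ divides the order of $\pi_{n+1}(a)$, so the sequence of orders is non-decreasing; were it bounded, it would eventually stabilize at some value $d$, giving $p^n\mid a^d-1$ for all large $n$, which forces the fixed positive integer $a^d-1$ to vanish and hence $a=1$, a contradiction.

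For statement (1), fix $X\in\XX_p$ and write $X=\overline{a^\IN}$ for some $a\in\IN\setminus p\IN$ with $a\ne 1$. By the growth fact just established, there is $n\in\IN$ for which $\pi_n(a)$ has order $\ge\max\{p,3\}$ in $\IZ_{p^n}^\times$. Setting $H:=\pi_n(a)^\IN$, the cyclic subgroup of $\IZ_{p^n}^\times$ generated by $\pi_n(a)$, Lemma~\ref{l:S5} yields $X=\overline{a^\IN}=\pi_n^{-1}(\pi_n(a)^\IN)=\pi_n^{-1}(H)$, as required.

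For statement (2), fix $n\in\IN$ and a cyclic subgroup $H$ of $\IZ_{p^n}^\times$ with $|H|\ge\max\{p,3\}$, and choose a generator $g$ of $H$, so that $g^\IN=H$. Since $|H|\ge 3$, the element $g$ is not the identity of $\IZ_{p^n}^\times$, so I would pick a positive integer $a$ with $\pi_n(a)=g$ and $a\ne 1$ (the least positive representative of the residue class $g$ works, and $a$ is automatically coprime with $p$ because $g\in\IZ_{p^n}^\times$). Then $\pi_n(a)=g$ has order $|H|\ge\max\{p,3\}$, so Lemma~\ref{l:S5} applies and gives $\overline{a^\IN}=\pi_n^{-1}(\pi_n(a)^\IN)=\pi_n^{-1}(g^\IN)=\pi_n^{-1}(H)$. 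Since $a\in\IN\setminus p\IN$ and $a\ne 1$, the set $\overline{a^\IN}$ belongs to $\XX_p$, completing the argument.

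I expect the only genuinely non-formal step to be the growth-of-order fact underlying statement (1); the remainder is bookkeeping built around Lemma~\ref{l:S5}. A minor point to handle with care in statement (2) is the verification that the chosen representative $a$ can be taken different from $1$, which is precisely where the hypothesis $|H|\ge\max\{p,3\}\ge 3$ (forcing $g\ne 1$) is used.
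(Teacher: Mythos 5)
Your proof is correct and follows essentially the same route as the paper: reduce both parts to Lemma~\ref{l:S5} by arranging that $\pi_n(a)$ has order $\ge\max\{p,3\}$. The only (immaterial) difference is in part (1), where the paper gets the order bound by the explicit choice $p^n>a^p$ rather than your monotonicity-and-unboundedness argument; your extra care in part (2) that the representative $a$ can be taken $\ne 1$ is a point the paper leaves implicit.
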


\begin{proof} 
1. Given any $X\in\XX_p$, find a number $a\in\IN\setminus (\{1\}\cup p\IN)$ such that $X=\overline{a^\IN}$. Choose any $n\in\IN$ with $p^n>a^p$ and observe that the cyclic subgroup $H\subset\IZ_{p^n}^\times$, generated by the element $\pi_n(a)=a+p^n\IZ$, has order $|H|\ge p+1\ge\max\{p,3\}$. By Lemma~\ref{l:S5}, $X=\overline{a^\IN}=\pi_n^{-1}(H)$.
\smallskip

2. Fix $n\in\IN$ and a cyclic subgroup $H$ of $\IZ_{p^n}^\times$ of order $|H|\ge \max\{p,3\}$. Find a number $a\in\IN$ such that the residue class $\pi_n(a)=a+p^n\IZ$ is a generator of the cyclic group $H$. Then $\pi_n(a)$ has order $|H|\ge\max\{p,3\}$, Lemma~\ref{l:S5} ensures that $\pi_n^{-1}(H)=\pi_n^{-1}(\pi_n(a)^\IN)=\overline{a^\IN}\in\XX_p$.
\end{proof}


For any $X\in\XX_p$, let $$n(X)=\min\big\{n\in\IN:X=\pi_n^{-1}(\pi_n(X)),\;|\pi_n(X)|\ge\max\{p,3\}\big\}.$$
Lemma~\ref{l:S6} implies that the number $n(X)$ is well-defined and $\pi_{n(X)}(X)$ is a cyclic subgroup of order $\ge\max\{p,3\}$ in the multiplicative group $\IZ_{p^{n(X)}}^\times$. Let $i(X)$ be the  index of the subgroup $\pi_{n(X)}(X)$ in $\IZ_{p^{n(X)}}^\times$.

\begin{lemma}\label{l:iso} For any odd prime number $p$ and two sets $X,Y\in\XX_p$ the inclusion $X\subseteq Y$ holds if and only if $i(Y)$ divides $i(X)$.
\end{lemma}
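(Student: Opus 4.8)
The plan is to reduce the statement to the elementary order structure of subgroups of a single cyclic group, exploiting that for odd $p$ the group $\IZ_{p^n}^\times$ is cyclic for every $n$ (Lemma~\ref{l:Gauss}).

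First I would bring both sets to a common modulus. Set $m=\max\{n(X),n(Y)\}$. Since $\pi_{n(X)}=\pi_{m,n(X)}\circ\pi_m$ and $X=\pi_{n(X)}^{-1}(\pi_{n(X)}(X))$ by the definition of $n(X)$, we may rewrite $X=\pi_m^{-1}(H_X)$, where $H_X:=\pi_{m,n(X)}^{-1}(\pi_{n(X)}(X))$ is a subgroup of $\IZ_{p^m}^\times$; likewise $Y=\pi_m^{-1}(H_Y)$. I would then record that this lifting preserves the index: as $\pi_{m,n(X)}\colon\IZ_{p^m}^\times\to\IZ_{p^{n(X)}}^\times$ is a surjective group homomorphism, taking preimages preserves index, so $[\IZ_{p^m}^\times:H_X]=[\IZ_{p^{n(X)}}^\times:\pi_{n(X)}(X)]=i(X)$, and symmetrically $[\IZ_{p^m}^\times:H_Y]=i(Y)$.

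Next I would convert set inclusion into subgroup inclusion. Every invertible residue class modulo $p^m$ has a representative in $\IN\setminus p\IN$, so the reduction map $\pi_m\colon\IN\setminus p\IN\to\IZ_{p^m}^\times$ is surjective. Surjectivity gives, for any subsets $A,B\subseteq\IZ_{p^m}^\times$, the equivalence $\pi_m^{-1}(A)\subseteq\pi_m^{-1}(B)\iff A\subseteq B$ (the nontrivial direction uses that each element of $A$ is hit by $\pi_m$). Applied to $A=H_X$, $B=H_Y$, this yields $X\subseteq Y\iff H_X\subseteq H_Y$.

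Finally the cyclicity of $\IZ_{p^m}^\times$ closes the argument: in a cyclic group there is exactly one subgroup of each index dividing the order, and one such subgroup is contained in another precisely when the index of the larger subgroup divides the index of the smaller one. Hence $H_X\subseteq H_Y\iff i(Y)\mid i(X)$, which combined with the previous paragraph gives the desired equivalence. The only real care needed is the index bookkeeping under the reduction maps, together with the essential use of the oddness of $p$: it is exactly cyclicity of $\IZ_{p^m}^\times$ that lets indices totally determine containment, and for $p=2$ the subgroup lattice is not determined by indices alone, so this argument genuinely breaks down — consistent with the hypothesis of the lemma.
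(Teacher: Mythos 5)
Your proposal is correct and follows essentially the same route as the paper: pass to the common modulus $m=\max\{n(X),n(Y)\}$, observe that $X$ and $Y$ are preimages of subgroups of index $i(X)$ and $i(Y)$ in the cyclic group $\IZ_{p^m}^\times$, and use the fact that in a cyclic group containment of subgroups is equivalent to divisibility of indices. The paper phrases the last step via an explicit generator $g$ and the subgroups $\langle g^{i(X)}\rangle$, $\langle g^{i(Y)}\rangle$, while you invoke the uniqueness of subgroups of given index; your extra care about index preservation under $\pi_{m,n(X)}^{-1}$ and the surjectivity of $\pi_m$ on $\IN\setminus p\IN$ just makes explicit what the paper leaves implicit.
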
 

\begin{proof} Let $m=\max\{n(X),n(Y)\}$. Then $X=\pi^{-1}_m(\pi_m(X))$, $Y=\pi^{-1}_m(\pi_m(Y))$ and $\pi_m(X),\pi_m(Y)$ are subgroups of the multiplicative group $\IZ_{p^m}^\times$, which is cyclic by Gauss Lemma~\ref{l:Gauss}. It follows that the subgroups $\pi_m(X)$ and $\pi_m(Y)$ have indexes $i(X)$ and $i(Y)$ in $\IZ_{p^m}^\times$, respectively. Let $g$ be a generator of the cyclic group $\IZ_{p^m}^\times$. It follows that the subgroups $\pi_m(X)$ and $\pi_m(Y)$ are generated by the elements $g^{i(X)}$ and $g^{i(Y)}$, respectively. Now we see that $X\subseteq Y$ iff $\pi_m(X)\subseteq \pi_m(Y)$ iff $g^{i(X)}\in (g^{i(Y)})^\IN$ iff $i(Y)$ divides $i(X)$.
\end{proof}

\begin{lemma}\label{l:index} For any odd prime number $p$, any $n\in\IN$, and the number $a=1+p^n$ we have $\overline{a^\IN}=1+p^n\IN_0$ and $i(\overline{a^\IN})=p^{n-1}(p-1)$.
\end{lemma}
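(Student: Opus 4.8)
The plan is to pin down the cyclic subgroups $\pi_m(a)^\IN$ of $\IZ_{p^m}^\times$ generated by $a=1+p^n$ for the relevant indices $m$, and then feed this into Lemma~\ref{l:S5}. The whole computation rests on one elementary fact about odd primes: by the binomial theorem, for every $j\in\IN_0$ one has $(1+p^n)^j\equiv 1+jp^n\pmod{p^{n+1}}$, since the remaining terms carry a factor $p^{2n}$ with $2n\ge n+1$. In particular $(1+p^n)^p\equiv 1\pmod{p^{n+1}}$ while $(1+p^n)^1=1+p^n\not\equiv 1\pmod{p^{n+1}}$, so $\pi_{n+1}(a)$ has order exactly $p=\max\{p,3\}$ in $\IZ_{p^{n+1}}^\times$. (More generally the same expansion shows $\pi_m(a)$ has order $p^{m-n}$ for $m>n$ and is trivial for $m\le n$, but only the case $m=n+1$ is actually needed.)

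First I would read off the cyclic group: the congruence above gives $\pi_{n+1}(a)^\IN=\{1+jp^n+p^{n+1}\IZ:0\le j<p\}$, which is precisely the set of residues in $\IZ_{p^{n+1}}$ that are $\equiv 1\pmod{p^n}$. Since $\pi_{n+1}(a)$ has order $p\ge\max\{p,3\}$, Lemma~\ref{l:S5} applies with index $n+1$ and yields $\overline{a^\IN}=\pi_{n+1}^{-1}(\pi_{n+1}(a)^\IN)$. Taking the preimage in $\IN$, this is exactly the set of positive integers congruent to $1$ modulo $p^n$, i.e. $1+p^n\IN_0$, which proves the first assertion.

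For the index $i(\overline{a^\IN})$ I must first check that $n(X)=n+1$ for $X=\overline{a^\IN}$. For every $m\le n$ all elements of $1+p^n\IN_0$ are $\equiv 1\pmod{p^m}$, so $\pi_m(X)$ is trivial and the constraint $|\pi_m(X)|\ge\max\{p,3\}$ fails; hence $n+1$ is the least admissible index and $n(X)=n+1$. Then $\pi_{n(X)}(X)=\pi_{n+1}(a)^\IN$ has order $p$ inside $\IZ_{p^{n+1}}^\times$, whose order is $\phi(p^{n+1})=p^n(p-1)$, so the index is $i(\overline{a^\IN})=p^n(p-1)/p=p^{n-1}(p-1)$. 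The only genuinely delicate point is keeping the two roles of the exponent apart and confirming that the order is \emph{exactly} $p$ (not larger) at level $n+1$, which is what guarantees the minimality behind $n(X)=n+1$; everything else is routine $p$-adic arithmetic, and the oddness of $p$ enters only through $\max\{p,3\}=p$ and the vanishing of the binomial tail.
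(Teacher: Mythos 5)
Your proof is correct and follows essentially the same route as the paper: both compute via the binomial expansion that $\pi_{n+1}(a)$ has order exactly $p$ in $\IZ_{p^{n+1}}^\times$, apply Lemma~\ref{l:S5} at level $n+1$ to identify $\overline{a^\IN}$ with $1+p^n\IN_0$, and divide $|\IZ_{p^{n+1}}^\times|=p^n(p-1)$ by $p$ to get the index. Your explicit verification that $n(X)=n+1$ is a small extra care the paper omits, but otherwise the arguments coincide.
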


\begin{proof} Observe that for any $k<p$ we have $a^{k}=(1+p^n)^{k}\in 1+kp^n+p^{n+1}\IZ\ne 1+p^{n+1}\IZ$ and $a^p=(1+p^n)^p\in 1\in p^{n+1}\IZ$, which means that the element $\pi_{n+1}(a)$ has order $p$ in the group $\IZ_{p^{n+1}}^\times$. By Lemma~\ref{l:S5}, $$\overline{a^\IN}=\pi_{n+1}^{-1}(\{a^k+p^{n+1}\IZ:0\le k<p\})=\bigcup_{k=0}^{p-1}(a^k+p^{n+1}\IN_0)=1+p^n\IN_0.$$ 
Also $i(\overline{a^\IN})={|\IZ_{p^{n+1}}^\times|}/{p}=p^{n-1}(p-1)$.
\end{proof}

Lemmas~\ref{l:S6} and \ref{l:iso} imply that for an odd $p$, the poset $\XX_p$ is order isomorphic to the set $$\mathcal D_p=\{d\in\IN:\mbox{$d$ divides $p^n(p-1)$ for some $n\in\IN$}\},$$ endowed with the divisibility relation.

An element $t$ of a partially ordered set $(X,\le)$ is called {\em ${\uparrow}$-chain} if its upper set ${\uparrow}t=\{x\in X:x\ge t\}$ is a chain. 
It is easy to see that the set of ${\uparrow}$-chain elements of the poset $\mathcal D_p$ coincides with the set $\{p^n(p-1):n\in\IN_0\}$ and hence is a well-ordered chain with the smallest element $(p-1)$. 

Below on the Hasse diagrams of the posets $\mathcal D_3$ and $\mathcal D_5$ (showing that these posets are not order isomorphic) the ${\uparrow}$-chain elements are drawn with the bold font.

$$\xymatrix{
&\mathcal D_3&&&&\mathcal D_5\\
\vdots&& \vdots&&\vdots&\vdots&\vdots\\
27\ar@{-}[u]\ar@{-}[urr]&&\mathbf{18}\ar@{-}[u]&&125\ar@{-}[u]\ar@{-}[ur]&50\ar@{-}[u]\ar@{-}[ur]&\mathbf{20}\ar@{-}[u]\\
9\ar@{-}[u]\ar@{-}[urr]&&\mathbf 6\ar@{-}[u]&&25\ar@{-}[u]\ar@{-}[ur]&10\ar@{-}[u]\ar@{-}[ur]&\mathbf 4\ar@{-}[u]\\
3\ar@{-}[u]\ar@{-}[urr]&&\mathbf 2\ar@{-}[u]&&5\ar@{-}[u]\ar@{-}[ur]&2\ar@{-}[u]\ar@{-}[ru]&\\
1\ar@{-}[u]\ar@{-}[urr]&&&&1\ar@{-}[u]\ar@{-}[ur]\\
}
$$

\vskip10pt

Lemmas~\ref{l:iso}, \ref{l:index} and the isomorphness of the posets $\XX_p$ and $\mathcal D_p$ imply the following lemma.

\begin{lemma}\label{l:tree} For an odd prime number $p$, the family $\{1+p^n\IN_0:n\in\IN\}$ coincides with the linearly ordered set of ${\uparrow}$-chain  elements of the poset $\XX_p$.
\end{lemma}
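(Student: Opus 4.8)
The plan is to combine the two isomorphism results already established. By Lemma~\ref{l:iso} and Lemma~\ref{l:S6}, for an odd prime $p$ the poset $\XX_p$ is order isomorphic to the poset $\mathcal D_p$ of natural numbers dividing some $p^n(p-1)$, ordered by divisibility, via the map $X\mapsto i(X)$, where a smaller index corresponds (after reversing inclusion appropriately) to a larger set. Concretely, Lemma~\ref{l:iso} tells us that $X\subseteq Y$ iff $i(Y)$ divides $i(X)$, so the index map is an order-reversing bijection from $(\XX_p,\subseteq)$ onto $(\mathcal D_p,\mid)$; since the order on $\XX_p$ is reverse inclusion, $X\mapsto i(X)$ is in fact an order isomorphism onto $\mathcal D_p$. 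Thus it suffices to identify the ${\uparrow}$-chain elements on the $\mathcal D_p$ side and transport them back through this isomorphism.

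First I would verify, on the combinatorial side, the claim stated just before the lemma: the ${\uparrow}$-chain elements of $\mathcal D_p$ are exactly the numbers $p^n(p-1)$ for $n\in\IN_0$. For the upper set of $d$ in $\mathcal D_p$ to be a chain, every pair of multiples of $d$ lying in $\mathcal D_p$ must be comparable under divisibility; this forces $d$ to contain the full factor $p-1$ (otherwise one could multiply by two distinct primes dividing $p-1$ and obtain an antichain above $d$) and to be of the form $p^n(p-1)$, since any element above $p^n(p-1)$ differs only in the power of $p$ and powers of $p$ form a chain. Conversely each $p^n(p-1)$ is easily checked to be a ${\uparrow}$-chain element. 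This is the routine part.

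Next I would use Lemma~\ref{l:index}, which computes $i(\overline{a^\IN})=p^{n-1}(p-1)$ for $a=1+p^n$ and simultaneously identifies $\overline{a^\IN}=1+p^n\IN_0$. Under the order isomorphism $X\mapsto i(X)$, the element $1+p^n\IN_0\in\XX_p$ is sent to $p^{n-1}(p-1)\in\mathcal D_p$. As $n$ ranges over $\IN$, the value $p^{n-1}(p-1)$ ranges exactly over $\{p^m(p-1):m\in\IN_0\}$, which is precisely the set of ${\uparrow}$-chain elements of $\mathcal D_p$. Since an order isomorphism preserves the property of being a ${\uparrow}$-chain element (the upper set of $X$ is a chain iff the upper set of its image is a chain), the sets $1+p^n\IN_0$ are exactly the ${\uparrow}$-chain elements of $\XX_p$, and they inherit the linear order from the chain $\{p^m(p-1):m\in\IN_0\}$ in $\mathcal D_p$.

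The main obstacle I anticipate is not any single hard step but rather keeping the direction of the order straight: $\XX_p$ carries reverse inclusion while $\mathcal D_p$ carries divisibility, and Lemma~\ref{l:iso} phrases the correspondence through the index, so one must confirm that $X\mapsto i(X)$ is genuinely order-preserving (not order-reversing) as a map $(\XX_p,\le)\to(\mathcal D_p,\mid)$ before invoking the transfer of ${\uparrow}$-chain elements. Once that bookkeeping is pinned down, the lemma follows immediately from the explicit computation in Lemma~\ref{l:index} matching the abstract description of ${\uparrow}$-chain elements in $\mathcal D_p$.
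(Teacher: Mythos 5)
Your proof is correct and follows essentially the same route as the paper, which deduces the lemma directly from Lemma~\ref{l:iso}, Lemma~\ref{l:index} and the order isomorphism of $\XX_p$ with $\mathcal D_p$; you have merely made explicit the transfer of ${\uparrow}$-chain elements along the index map. One small repair: when $p-1$ is a prime power (e.g.\ $p=3$) there are not two distinct primes dividing $p-1$, so the antichain above a $d\in\mathcal D_p$ with $(p-1)\nmid d$ should be taken to be $\{qd,pd\}$ for a prime $q\mid(p-1)$ with $l_q(d)<l_q(p-1)$, rather than two primes dividing $p-1$.
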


Now we reveal the order structure of the poset $\XX_2$. This poset consists of the closures $\overline{a^\IN}$ in the $2$-adic topology of the sets $a^\IN$ for non-zero odd numbers $a>1$.

\begin{lemma}\label{l:X2}
Let $a\in\IN$ and $X=\overline{a^\IN}$.
\begin{enumerate}
\item If $a\in 1+4\IN$, then $\overline{a^\IN}=1+2^{n(X)-2}\IN_0$.
\item If $a\in 3+4\IN$, then $\overline{a^\IN}=(1+2^{n(X)-1}\IN_0)\cup(-1+2^{n(X)-2}+2^{n(X)-1}\IN_0)$.
\end{enumerate}
In both cases, $i(X)=2^{n(X)-3}$.
\end{lemma}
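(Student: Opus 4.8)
The plan is to apply Lemma~\ref{l:S5} at $n:=n(X)$ to write $X=\overline{a^\IN}=\pi_n^{-1}(H)$, where $H=\langle\pi_n(a)\rangle$ is the cyclic subgroup of $\IZ^\times_{2^n}$ generated by $\pi_n(a)$. By the definition of $n(X)$ the order $|H|\ge 3$, and since $H$ is a subgroup of the $2$-group $\IZ^\times_{2^n}$ this forces $|H|\ge 4$; in particular $\IZ^\times_{2^n}$ has an element of order $4$, so by Lemma~\ref{l:Gauss} necessarily $n\ge 4$. The first thing I would pin down is that $|H|=4$ exactly. The restriction of $\pi_{n,n-1}$ to units is a surjection $\IZ^\times_{2^n}\to\IZ^\times_{2^{n-1}}$ with kernel of order $2$, so the order of $\pi_{n-1}(a)=\pi_{n,n-1}(\pi_n(a))$ is at least $|H|/2\ge 2$. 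On the other hand, minimality of $n(X)$, together with Lemma~\ref{l:S5} (which makes the preimage condition automatic as soon as the order of $\pi_{n-1}(a)$ reaches $3$), forces the order of $\pi_{n-1}(a)$ to be $\le 2$. Hence $\pi_{n-1}(a)$ has order exactly $2$ and $|H|\le 2\cdot 2=4$, so $|H|=4$. Since $|\IZ^\times_{2^n}|=2^{n-1}$, this already gives $i(X)=2^{n-1}/4=2^{n-3}=2^{n(X)-3}$ in both cases.

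It remains to identify $H$ explicitly, for which I would use the Gauss decomposition $\IZ^\times_{2^n}=\langle-1\rangle\oplus\langle 5\rangle$ from Lemma~\ref{l:Gauss}, noting that $\langle 5\rangle$ is precisely the subgroup of units congruent to $1$ modulo $4$ (both sets have $2^{n-2}$ elements). Suppose first that $a\in 1+4\IN$. Then $\pi_n(a)\in\langle 5\rangle$, so the order-$4$ group $H$ is the unique subgroup of order $4$ of the cyclic group $\langle 5\rangle$. The units congruent to $1$ modulo $2^{n-2}$ form another order-$4$ subgroup of $\langle 5\rangle$: it is the kernel of the reduction $\IZ^\times_{2^n}\to\IZ^\times_{2^{n-2}}$, and its four elements $1,\,1+2^{n-2},\,1+2^{n-1},\,1+2^{n-2}+2^{n-1}$ are $\equiv 1\pmod 4$ because $n\ge 4$. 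By uniqueness of the order-$4$ subgroup in a cyclic group the two subgroups coincide, and reading off the single residue class modulo $2^{n-2}$ gives $X=\pi_n^{-1}(H)=1+2^{n-2}\IN_0$.

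Now suppose $a\in 3+4\IN$, so $\pi_n(a)\equiv 3\pmod 4$. Then $\pi_n(a)^2\equiv 1\pmod 8$ lies in $\langle 5\rangle$ and has order $2$; being the unique involution of the cyclic group $\langle 5\rangle$, it equals $1+2^{n-1}$. I would then solve $x^2=1+2^{n-1}$ in $\IZ^\times_{2^n}$: the solutions form a single coset of the Klein four-group of $2$-torsion $\{1,-1,1+2^{n-1},-1+2^{n-1}\}$, so there are exactly four of them, and a direct computation (using $2(n-2)\ge n$ for $n\ge 4$) shows that two of them, namely $-1+2^{n-2}$ and $-1+2^{n-2}+2^{n-1}$, are $\equiv 3\pmod 4$ while the other two are $\equiv 1\pmod 4$. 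Since $\pi_n(a)$ and $\pi_n(a)^3$ are exactly the order-$4$ elements of $H$, both $\equiv 3\pmod 4$ and squaring to $1+2^{n-1}$, they must be these two residues, whence $H=\{1,\,1+2^{n-1},\,-1+2^{n-2},\,-1+2^{n-2}+2^{n-1}\}$. Reading off the two residue classes modulo $2^{n-1}$ gives $X=\pi_n^{-1}(H)=(1+2^{n-1}\IN_0)\cup(-1+2^{n-2}+2^{n-1}\IN_0)$.

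I expect the case $a\equiv 3\pmod 4$ to be the main obstacle, since there one must locate the two order-$4$ generators of $H$ among the residues modulo $2^n$ rather than simply invoking uniqueness of a subgroup. The device that makes this manageable is that squaring is exactly $4$-to-$1$ on $\IZ^\times_{2^n}$ for $n\ge 4$ (its kernel being the Klein four-group of $2$-torsion), so computing the four square roots of the single involution $1+2^{n-1}$ and filtering them by their residue modulo $4$ isolates the two generators with no further casework. The reduction $|H|=4$ established at the outset is what keeps both cases uniform and delivers the value of $i(X)$ immediately, so the only genuinely computational step is the verification that $-1+2^{n-2}$ and $-1+2^{n-2}+2^{n-1}$ square to $1+2^{n-1}$ modulo $2^n$, which is routine.
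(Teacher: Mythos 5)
Your proof is correct and takes essentially the same route as the paper's: both reduce, via Lemma~\ref{l:S5} and the minimality of $n(X)$, to identifying the order-$4$ cyclic subgroup $C_X=\pi_{n(X)}(X)$ of $\IZ^\times_{2^{n(X)}}$ (whence $i(X)=2^{n(X)-3}$) and then locate $C_X$ using the Gauss decomposition according to whether $a\equiv 1$ or $3\pmod 4$. The only harmless differences are that you spell out why $|C_X|=4$ exactly, and that in the case $a\equiv 3\pmod 4$ you find the two generators as the square roots of $1+2^{n(X)-1}$ that are $\equiv 3\pmod 4$, where the paper instead invokes the uniqueness of the order-$4$ cyclic subgroup not contained in the maximal cyclic subgroup $M_X$.
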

\begin{proof}
Lemma~\ref{l:S5} and the definition of the number $n(X)$ imply that the projection $C_X:=\pi_{n(X)}(X)=\pi_{n(X)}(a^\IN)$ is a cyclic subgroup of order $4$ of the group $\IZ^\times_{2^{n(X)}}$, and that $X=\pi_{n(X)}^{-1}(C_X)$. In particular, $i(X)=|\IZ_{2^{n(X)}}^\times|/4=2^{n(X)-3}$. 

By the Gauss Lemma~\ref{l:Gauss}, $M_X=\{1+4k+2^{n(X)}\IZ:0\le k<2^{n(X)-2}\}$ is a maximal cyclic subgroup of $\IZ_{2^{n(X)}}$. If $a\in 1+4\IN$, the subgroup generated by $\pi_{n(X)}(a)$ is contained in $M_X$. Then $C_X=\{1+k\cdot 2^{n(X)-2}+2^{n(X)}\IZ\mid 0\leq k<4\}$ and $X=\pi_{n(X)}^{-1}(C_X)=1+2^{n(X)-2}\IZ$.

If $a\in 3+4\IN$, then $C_X$ is not contained in $M_X$. By Gauss Lemma~\ref{l:Gauss} again, the unique cyclic subgroup of $\IZ_{2^{n(X)}}$ of order $4$ not contained in $M_X$ is generated by an element $g$ of $\IZ_{2^{n(X)}}$ such that $-g$ generates the cyclic subgroup of $M_X$ of order $4$. Therefore,
\begin{equation*}
C_X=\{1+2^{n(X)}\IZ,1+2^{n(X)-1}+2^{n(X)}\IZ,-1+2^{n(X)-2}+2^{n(X)}\IZ,-1+2^{n(X)-2}+2^{n(X)-1}+2^{n(X)}\IZ\}.
\end{equation*}
The first two elements, lifted to $\IZ$, give the sequence $1+2^{n(X)-1}\IN$, while the last two give $-1+2^{n(X)-2}+2^{n(X)-1}\IN$. Hence, $X$ is their union.
\end{proof}

\begin{lemma}\label{l:ind2} For every $n\ge 2$ 
\begin{enumerate}
\item the set $X=\overline{(1+2^n)^\IN}\in\XX_2$ coincides with $1+2^n\IN_0$ and has $i(X)=2^{n-1}$;
\item the set $Y=\overline{(-1+2^n)^\IN}\in\XX_2$ coincides with  $(1+2^{n+1}\IN_0)\cup(2^n-1+2^{n+1}\IN_0)$ and has $i(Y)=2^{n-1}$. 
\end{enumerate}
\end{lemma}

\begin{proof} 1. Observe that for every $k<4$ we have $(1+2^n)^k\in 1+k2^n+2^{n+2}\IZ\ne 1+2^{n+2}\IZ$ and $(1+2^n)^4\in 1+2^{n+2}\IZ$, which means that the element $(1+2^n)+2^{n+2}\IZ$ has order 4 in the group $\IZ_{2^{n+2}}^\times$. Then the element $X=\overline{(1+2^n)^\IN}\in\XX_2$ has $n(X)=n+2$ and hence  $X=1+2^n\IN_0$ and $i(X)=2^{n(X)-3}=2^{n-1}$ by Lemma~\ref{l:X2}.
\smallskip

2. Also for every $k<4$ we have $(-1+2^n)^k\in(-1)^k+k2^n+2^{n+2}\IZ\ne 1+2^{n+2}\IZ$  and $(-1+2^n)^4\in 1+2^{n+2}\IZ$, which means that the element $(-1+2^n)+2^{n+2}\IZ$ has order 4 in the group $\IZ_{2^{n+2}}^\times$.  Then the element $Y=\overline{(-1+2^n)^\IN}\in\XX_2$ has $n(Y)=n+2$ and hence  $Y=(1+2^{n+1}\IN_0)\cup(2^n-1+2^{n+1}\IN_0)$ and $i(Y)=2^{n(Y)-3}=2^{n-1}$ by Lemma~\ref{l:X2}.
\end{proof}

\begin{lemma}\label{l:ord2} For distinct sets $X,Y\in\XX_2$, the inclusion $X\subset Y$ holds if and only if  $X\subseteq 1+4\IN_0$ and $i(Y)<i(X)$.
\end{lemma}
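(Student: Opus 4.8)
The plan is to translate the set inclusion $X\subseteq Y$ into an inclusion of finite subgroups of $\IZ^\times_{2^m}$ and then read off the answer from the Gauss structure of Lemma~\ref{l:Gauss}. Concretely, set $m=\max\{n(X),n(Y)\}$. As in the proof of Lemma~\ref{l:X2}, Lemma~\ref{l:S5} gives $X=\pi_m^{-1}(\pi_m(X))$ and $Y=\pi_m^{-1}(\pi_m(Y))$, where $\pi_m(X)=\pi_{m,n(X)}^{-1}(C_X)$ and $\pi_m(Y)=\pi_{m,n(Y)}^{-1}(C_Y)$ are subgroups of $\IZ^\times_{2^m}$ whose indices are still $i(X)$ and $i(Y)$ (the index is unchanged on passing from level $n(\cdot)$ to level $m$). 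Since $\pi_m$ maps $\IN\setminus 2\IN$ onto $\IZ^\times_{2^m}$, the inclusion $X\subseteq Y$ is equivalent to $\pi_m(X)\subseteq\pi_m(Y)$, and $X=Y$ is equivalent to $\pi_m(X)=\pi_m(Y)$. Hence a strict inclusion $X\subset Y$ is equivalent to $\pi_m(X)\subsetneq\pi_m(Y)$, which forces $|\pi_m(X)|<|\pi_m(Y)|$, i.e. $i(Y)<i(X)$, and therefore $n(Y)<n(X)$ and $m=n(X)$.

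This already secures the condition $i(Y)<i(X)$ in both directions, so it remains to handle the condition $X\subseteq 1+4\IN_0$. An odd number lies in $1+4\IN_0$ precisely when its residue lies in the maximal cyclic subgroup $M=\langle 5+2^m\IZ\rangle$ of $\IZ^\times_{2^m}$ (the elements congruent to $1$ modulo $4$). Thus $X\subseteq 1+4\IN_0$ is equivalent to $C_X=\pi_{n(X)}(X)\subseteq M$, that is, to case (1) of Lemma~\ref{l:X2}.

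For the necessity of $X\subseteq 1+4\IN_0$ I would argue by contradiction, using the decomposition $\IZ^\times_{2^m}\cong C_2\oplus C_{2^{m-2}}$ of Lemma~\ref{l:Gauss} and writing elements additively as pairs $(\varepsilon,t)$ with $-1\leftrightarrow(1,0)$ and $M=\{0\}\oplus C_{2^{m-2}}$. If $X$ falls under case (2), then by the description in the proof of Lemma~\ref{l:X2} the order-$4$ group $C_X$ is generated by an element of the form $(1,u)$ with $u$ of order $4$, so $C_X\not\subseteq M$. The key observation is that every cyclic subgroup of $\IZ^\times_{2^m}$ of order $\ge 8$ has its order-$4$ subgroup inside $M$: an element of order $2^\ell$ with $\ell\ge 3$ has the shape $(\varepsilon,t)$ with $t$ of order $2^\ell$, and its $(2^{\ell-2})$-th power is $(0,t^{2^{\ell-2}})\in M$. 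Consequently $C_X$, being of order $4$ and not contained in $M$, is a maximal cyclic subgroup of $\IZ^\times_{2^m}$, in the sense that no cyclic subgroup properly contains it. Now $\pi_m(Y)=\pi_{m,n(Y)}^{-1}(C_Y)$ is cyclic by Lemma~\ref{l:S5} (via the Gauss structure of Lemma~\ref{l:Gauss}) and contains $\pi_m(X)=C_X$, so maximality forces $\pi_m(Y)=C_X=\pi_m(X)$, whence $X=Y$, contradicting $X\subset Y$. Therefore $X$ must be of case (1), i.e. $X\subseteq 1+4\IN_0$.

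For the sufficiency, assume $X\subseteq 1+4\IN_0$ and $i(Y)<i(X)$; then $n(Y)<n(X)$, so $n(Y)-1\le n(X)-2$. By Lemma~\ref{l:X2}, $X=1+2^{n(X)-2}\IN_0$, while $Y$ equals either $1+2^{n(Y)-2}\IN_0$ (case 1) or $(1+2^{n(Y)-1}\IN_0)\cup(2^{n(Y)-2}-1+2^{n(Y)-1}\IN_0)$ (case 2). In either case $2^{n(Y)-1}$, and a fortiori $2^{n(Y)-2}$, divides $2^{n(X)-2}$, so every $x\equiv 1\pmod{2^{n(X)-2}}$ satisfies the congruence defining the first component of $Y$; thus $X\subseteq Y$, and $X\neq Y$ because $i(X)\neq i(Y)$. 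This yields $X\subset Y$ and completes the equivalence. The main obstacle is the necessity direction: one must pin down the precise group-theoretic reason that case-$(2)$ sets are maximal elements of $\XX_2$, which rests on the $C_2\oplus C_{2^{m-2}}$ structure of $\IZ^\times_{2^m}$ together with the cyclicity of the preimages $\pi_{m,n}^{-1}(C_Y)$.
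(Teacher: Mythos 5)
Your proof is correct. The ``if'' half coincides with the paper's argument: both read off the explicit progression descriptions of $X$ and $Y$ from Lemma~\ref{l:X2} and compare moduli (your exponent $2^{n(X)-2}$ is the one consistent with Lemma~\ref{l:X2}; the printed proof writes $2^{n(X)-1}$, an off-by-one slip that does not affect the argument). The ``only if'' half is where you genuinely diverge: the paper disposes of it in one line by ``writing explicitly $X$ and $Y$ through Lemma~\ref{l:X2}'', i.e.\ a direct congruence check showing that a case-(2) set is contained in no other element of $\XX_2$, whereas you lift everything to level $m=\max\{n(X),n(Y)\}$ and argue group-theoretically: the index comparison gives $i(Y)<i(X)$ at once, and the necessity of $X\subseteq 1+4\IN_0$ follows because a cyclic subgroup of order $4$ of $\IZ^\times_{2^m}\cong C_2\oplus C_{2^{m-2}}$ that is not contained in $M=\langle 5+2^m\IZ\rangle$ is maximal among cyclic subgroups (every cyclic subgroup of order $\ge 8$ has its unique order-$4$ subgroup inside $\{0\}\oplus C_{2^{m-2}}$), so the cyclic group $\pi_m(Y)\supseteq C_X$ must equal $C_X$, forcing $X=Y$. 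Your route is more structural and replaces the case-by-case congruence verification by a single maximality principle; its only extra cost is the cyclicity of $\pi_{m,n(Y)}^{-1}(C_Y)$, which you correctly source from (the proof of) Lemma~\ref{l:S5}. Both arguments ultimately rest on the same Gauss decomposition, so the difference is one of packaging rather than substance, but your treatment of the harder direction is the more complete of the two.
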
 
\begin{proof}
If $X\subseteq 1+4\IN_0$, then by Lemma \ref{l:X2}, $X=1+2^{n(X)-1}\IN_0$. If $i(Y)<i(X)$, then $n(Y)<n(X)$, and thus by Lemma \ref{l:X2} we get $Y\supset 1+2^{n(X)-1}\IN_0$ (i.e., $Y\supset X$) both if $Y$ is contained in $1+4\IN_0$ or if it is not.

Conversely, if $X\subset Y$, the claim follows by writing explicitly $X$ and $Y$ through Lemma \ref{l:X2}.
\end{proof}

Lemmas~\ref{l:ind2} and \ref{l:ord2} imply:

\begin{lemma}\label{l:anti2} The family $\min\XX_2=\{X\in\XX_2:X\not\subseteq 1+8\IN_0\}$ coincides with the set of minimal elements of the poset $\XX_2$ and the set $\XX_2\setminus \min\XX_2=\{X\in\XX_2:X\subseteq 1+8\IN_0\}$ is linearly ordered and coincides with the set $\{1+2^n\IN_0:n\ge 3\}$.
\end{lemma}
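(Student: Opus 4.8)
The plan is to first pin down the members of $\XX_2$ explicitly and then read off both assertions from the order criterion of Lemma~\ref{l:ord2}. By Lemma~\ref{l:X2} every $X\in\XX_2$ has $n(X)\ge 4$ (since the cyclic group $\pi_{n(X)}(X)$ has order $4$, and $\IZ_{2^{n(X)}}^\times$ contains an element of order $4$ only when $n(X)\ge 4$) and is of one of two shapes according to whether its generating $a$ is $\equiv 1$ or $\equiv 3$ modulo $4$. Combining this with the explicit descriptions in Lemma~\ref{l:ind2}, I would record that $\XX_2=\{X_n:n\ge 2\}\cup\{Y_n:n\ge 2\}$, where $X_n:=1+2^n\IN_0$ and $Y_n:=(1+2^{n+1}\IN_0)\cup(2^n-1+2^{n+1}\IN_0)$, with $i(X_n)=i(Y_n)=2^{n-1}$. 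In particular every index is a power of $2$ that is at least $2$, the value $2$ being attained exactly at $n=2$. Throughout I keep in mind that the order on $\XX_2$ is reverse inclusion, so an element is \emph{minimal} in the poset precisely when it admits no proper superset inside $\XX_2$.

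Next I would settle which $X$ lie in $1+8\IN_0$ by routine modular arithmetic. Since $X_n=1+2^n\IN_0$, one has $X_n\subseteq 1+8\IN_0$ iff $8\mid 2^n$, i.e.\ iff $n\ge 3$; on the other hand each $Y_n$ contains $2^n-1\equiv 3\pmod 4$ and hence $Y_n\not\subseteq 1+4\IN_0\supseteq 1+8\IN_0$. Thus $\{X\in\XX_2:X\subseteq 1+8\IN_0\}=\{X_n:n\ge 3\}=\{1+2^n\IN_0:n\ge 3\}$. Because $X_{n+1}\subsetneq X_n$ for all $n$, this family is a decreasing chain and so is linearly ordered, which yields the second assertion of the lemma.

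For the first assertion I would apply Lemma~\ref{l:ord2}: for distinct $X,Y$ one has $X\subsetneq Y$ iff $X\subseteq 1+4\IN_0$ and $i(Y)<i(X)$, so $X$ fails to be minimal exactly when $X\subseteq 1+4\IN_0$ and some element of $\XX_2$ has strictly smaller index. Each $Y_n$ is not contained in $1+4\IN_0$, hence has no proper superset and is minimal. Among the $X_n$ (all contained in $1+4\IN_0$, as $n\ge 2$), the element $X_2$ realizes the least possible index $2$ and is therefore minimal, whereas for $n\ge 3$ the element $X_{n-1}\in\XX_2$ is a proper superset of $X_n$ (Lemma~\ref{l:ord2} applies since $i(X_{n-1})=2^{n-2}<2^{n-1}=i(X_n)$), so $X_n$ is not minimal. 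Hence the minimal elements are exactly $\{X_2\}\cup\{Y_n:n\ge 2\}$, which by the previous paragraph is precisely $\{X\in\XX_2:X\not\subseteq 1+8\IN_0\}=\min\XX_2$.

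The only genuinely load-bearing step is the index bookkeeping that makes $1+8\IN_0$ the correct threshold: one must see that $X_2=1+4\IN_0$ is the unique index-minimal set among those contained in $1+4\IN_0$, so that it alone survives as a minimal element while every deeper $X_n$ ($n\ge 3$) acquires a proper superset. Establishing that no element has index below $2$ — equivalently $n(X)\ge 4$ for all $X\in\XX_2$ — is where I expect to have to be careful; the remainder reduces to the direct modular computations above together with the already-proved Lemmas~\ref{l:X2}, \ref{l:ind2} and~\ref{l:ord2}.
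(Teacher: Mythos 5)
Your proof is correct and follows essentially the route the paper intends: the paper offers no argument beyond ``Lemmas~\ref{l:ind2} and \ref{l:ord2} imply,'' and your write-up is exactly that derivation, supplemented by the necessary appeal to Lemma~\ref{l:X2} to enumerate $\XX_2$ completely as $\{1+2^n\IN_0:n\ge2\}\cup\{(1+2^{n+1}\IN_0)\cup(2^n-1+2^{n+1}\IN_0):n\ge2\}$. Your observation that every index equals $2^{n(X)-3}\ge 2$, so that $1+4\IN_0$ is index-minimal and hence a minimal element of the poset, correctly supplies the one step that citing Lemma~\ref{l:ind2} alone would leave open.
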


$$
\begin{gathered}
\xymatrix{
\vdots\ar@{-}[dr]&\vdots\\
\overline{33^\IN}\ar@{-}[u]\ar@{-}[dr]&\overline{31^\IN}\\
\overline{17^\IN}\ar@{-}[u]\ar@{-}[dr]&\overline{15^\IN}\\
\overline{9^\IN}\ar@{-}[u]\ar@{-}[dr]&\overline{7^\IN}\\
\overline{5^\IN}\ar@{-}[u]&\overline{3^\IN}\\
}\\
\mbox{The Hasse diagram of the poset $\XX_2$}
\end{gathered}
$$
\vskip10pt

\begin{lemma}\label{l:3} For any homeomorphism $h$ of the Golomb space $\IN_\tau$ and any $n\in\{1,2,3\}$ we have $h(n)=n$.
\end{lemma}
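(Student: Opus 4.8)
The plan is to use the homeomorphism invariants developed in the preceding lemmas to pin down the images of the primes $2$ and $3$, together with Lemma~\ref{l:BMT}(1) which already gives $h(1)=1$. The key observation is that Lemma~\ref{l:isomor} tells us $h$ induces an order isomorphism $\XX_p\to\XX_{h(p)}$ for every prime $p$, so the order-theoretic structure of $\XX_p$ is a homeomorphism invariant of the prime $p$. I would exploit the fact, recorded in the two Hasse-diagram displays and in Lemmas~\ref{l:iso}--\ref{l:anti2}, that $\XX_2$ is structurally different from $\XX_p$ for every odd $p$: by Lemma~\ref{l:anti2} the poset $\XX_2$ has a nonempty antichain of minimal elements (namely $\min\XX_2$) sitting below a single linearly ordered tail, whereas for odd $p$ Lemma~\ref{l:tree} and the isomorphism $\XX_p\cong\mathcal D_p$ show $\XX_p$ has a unique minimal element $(p-1)$ and a more intricate branching structure. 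Hence $\XX_2$ cannot be order isomorphic to any $\XX_p$ with $p$ odd, forcing $h(2)=2$.

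First I would make precise the distinction between $\XX_2$ and the odd posets. The cleanest invariant to use is the set of minimal elements: $\mathcal D_p$ has a least element $1$ (corresponding to $\IZ_{p^n}^\times$ itself, index $1$), so $\XX_p$ for odd $p$ has a single minimum, while Lemma~\ref{l:anti2} exhibits $\XX_2$ with infinitely many minimal elements forming an antichain. Since an order isomorphism preserves the property of having a unique minimal element, and $h$ restricts to an order isomorphism $\XX_2\to\XX_{h(2)}$, the prime $h(2)$ must itself have the ``$\XX_{h(2)}$ has no unique minimum'' property; as all odd primes fail this, we conclude $h(2)=2$. This is the crux of the argument and the step I expect to require the most care, because I must verify that the qualitative order-theoretic feature I single out really does separate $p=2$ from all odd $p$ uniformly, rather than merely for the small cases drawn in the diagrams.

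Next, having fixed $h(2)=2$, I would distinguish $3$ from the remaining odd primes $p\ge 5$ by comparing the posets $\mathcal D_3$ and $\mathcal D_p$. The displayed Hasse diagrams already signal that $\mathcal D_3$ and $\mathcal D_5$ are not order isomorphic, and the general mechanism is that $\mathcal D_p\cong\XX_p$ encodes the arithmetic of $p-1$: the unique minimum is $(p-1)$, and the local branching just above it is governed by the prime factorization of $p-1$. For $p=3$ one has $p-1=2$, and the $\uparrow$-chain elements $\{3^n\cdot 2:n\in\IN_0\}$ together with the covering structure give a poset whose minimum is covered in a way characteristic of $p-1=2$; for any $p\ge 5$ the number $p-1\ge 4$ has a different divisor lattice, so the order type of the neighborhood of the minimum differs. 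I would isolate a concrete order invariant — for instance the number of elements covering the minimum, or more robustly the isomorphism type of the interval $\uparrow(p-1)$ modulo the $\uparrow$-chain — that takes one value at $p=3$ and a strictly different value for every $p\ge 5$, and invoke Lemma~\ref{l:isomor} to force $h(3)=3$.

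Finally I would assemble the three conclusions $h(1)=1$, $h(2)=2$, $h(3)=3$ into the statement of Lemma~\ref{l:3}. The first is immediate from Lemma~\ref{l:BMT}(1); the second and third follow from the two order-isomorphism-invariant arguments above via Lemma~\ref{l:isomor}, since a homeomorphism must send each prime to a prime (Lemma~\ref{l:BMT}(2)) inducing an order isomorphism of the associated posets, and only the ``correct'' prime admits such an isomorphism. The main obstacle, as noted, is the uniformity of the separating invariant in the odd case: I must ensure the chosen order-theoretic feature distinguishes $\mathcal D_3$ from $\mathcal D_p$ for \emph{all} $p\ge 5$ simultaneously, not just for the tabulated examples, which reduces to a clean statement about how the divisor structure of $p-1$ is reflected in the poset $\mathcal D_p$.
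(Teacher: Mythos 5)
Your treatment of $n=1$ and $n=2$ is sound: $h(1)=1$ is Lemma~\ref{l:BMT}(1), and for $h(2)=2$ your invariant (uniqueness of the minimal element) works just as well as the paper's (existence of an infinite antichain, extracted from Lemma~\ref{l:ind2}): both separate $\XX_2$ from $\XX_p\cong\mathcal D_p$ for odd $p$, since $\mathcal D_p$ has the least element $1$ while $\min\XX_2$ is an infinite antichain by Lemma~\ref{l:anti2}.

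The gap is in the step $h(3)=3$. The one concrete invariant you propose, ``the number of elements covering the minimum,'' does not separate $\mathcal D_3$ from $\mathcal D_p$ for all $p\ge 5$: the atoms of $\mathcal D_p$ are exactly the primes dividing $p(p-1)$, so $\mathcal D_3$ has the two atoms $2,3$ while $\mathcal D_5$ has the two atoms $2,5$ (and the same count $2$ occurs for every Fermat prime $p=2^k+1$). Your fallback, ``the isomorphism type of the interval ${\uparrow}(p-1)$ modulo the ${\uparrow}$-chain,'' is not an argument: ${\uparrow}(p-1)$ is, by the very definition of an ${\uparrow}$-chain element, an infinite chain for every odd $p$, hence carries no separating information, and you yourself flag the uniformity over all $p\ge 5$ as an unresolved obstacle rather than resolving it. The paper closes exactly this gap with a specific invariant: the ${\uparrow}$-chain elements of $\mathcal D_p$ are precisely $\{p^n(p-1):n\in\IN_0\}$, so the smallest ${\uparrow}$-chain element is $p-1$, an order-theoretically definable object preserved by any isomorphism $\mathcal D_3\to\mathcal D_{h(3)}$; its down-set ${\downarrow}(p-1)$ is the set of divisors of $p-1$ and has cardinality $2$ if and only if $p-1$ is prime, which among odd primes happens only for $p=3$ (for $p\ge 5$ the number $p-1$ is even and $\ge 4$, hence composite). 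Without this, or an equivalent explicit invariant valid for every $p\ge5$, your proof of $h(3)=3$ is incomplete.
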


\begin{proof} 1. The equality $h(1)=1$ follows from Lemma~\ref{l:BMT}(1). 
\smallskip

2. By Lemma~\ref{l:isomor}, $h$ induces an order isomorphism of the posets $\XX_2$ and $\XX_{h(2)}$. By Lemma~\ref{l:ind2}, the set $\{\overline{(-1+2^n)^\IN}:n\ge 2\}$ is an infinite antichain in the poset $\XX_2$. Consequently, the poset $\XX_{h(2)}$ also contains an infinite antichain. 
On the other hand, for any odd prime number $p$ the poset $\XX_p$ is order-isomorphic to the poset $\mathcal D_p$, which contain no infinite antichains. Consequently, $\XX_{h(2)}$ cannot be order isomorphic to $\XX_p$, and hence $h(2)=2$.
\smallskip

3. By Lemma~\ref{l:BMT}(2), $h(3)$ is a prime number, not equal to $h(2)=2$. By Lemma~\ref{l:isomor}, $h$ induces an order isomorphism of the posets $\XX_3$ and $\XX_{h(3)}$.  Then the posets $\mathcal D_3$ and $\mathcal D_{h(3)}$ also are order isomorphic. The smallest ${\uparrow}$-chain element of the poset $\mathcal D_3$ is $2$ and the set ${\downarrow}2=\{d\in\mathcal D_3:d$ divides $2\}$ has cardinality $2$. On the other hand, the smallest ${\uparrow}$-chain element of the poset $\mathcal D_{h(3)}$ is $h(3)-1$. Since the sets $\mathcal D_3$ and $\mathcal D_{h(3)}$ are order-isomorphic, the set ${\downarrow}(h(3)-1)=\{d\in\mathcal D_p:d$ divides $h(3)-1\}$ has cardinality $2$, which means that the number $h(3)-1$ is prime. Observing that $3$ is a unique odd prime number $p$ such that $p-1$ is prime, we conclude that $h(3)=3$.
\end{proof}

\begin{lemma}\label{l:main-p} For any homeomorphism $h$ of the Golomb space $\IN_\tau$, and any prime number $p$ we have $h(1+p^n\IN_0)=1+h(p)^n\IN_0$ for all $n\in\IN$.
\end{lemma}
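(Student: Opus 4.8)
The plan is to prove this by separating the cases of odd $p$ and $p=2$, and in each case using the order-theoretic characterization of the sets $1+p^n\IN_0$ inside the poset $\XX_p$ that was established in the preceding lemmas. First I would invoke Lemma~\ref{l:isomor}, which tells us that any homeomorphism $h$ induces an order isomorphism $h:\XX_p\to\XX_{h(p)}$ sending $\overline{a^\IN}$ to $\overline{h(a)^\IN}$. The strategy is then to show that this order isomorphism must carry the family $\{1+p^n\IN_0:n\in\IN\}$ bijectively onto $\{1+h(p)^n\IN_0:n\in\IN\}$ in an order-preserving way, and since each of these is a chain indexed by $n$, the match must respect the index $n$ as well.

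For the \emph{odd} case, Lemma~\ref{l:tree} identifies $\{1+p^n\IN_0:n\in\IN\}$ as precisely the set of ${\uparrow}$-chain elements of $\XX_p$, which is a well-ordered chain. Since an order isomorphism preserves the property of being an ${\uparrow}$-chain element as well as the order, $h$ must send this well-ordered chain in $\XX_p$ onto the corresponding well-ordered chain of ${\uparrow}$-chain elements of $\XX_{h(p)}$, namely $\{1+h(p)^n\IN_0:n\in\IN\}$ (here I use Lemma~\ref{l:3} to guarantee $h(p)$ is odd when $p$ is odd, so that $\XX_{h(p)}$ has the same structure). An isomorphism between two well-ordered sets of the same order type is unique and matches elements by their rank, so the $n$-th element $1+p^n\IN_0$ must go to the $n$-th element $1+h(p)^n\IN_0$. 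This yields $h(1+p^n\IN_0)=1+h(p)^n\IN_0$.

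For $p=2$, I would use Lemma~\ref{l:3} which already gives $h(2)=2$, so the claim reduces to showing $h(1+2^n\IN_0)=1+2^n\IN_0$ for a self-isomorphism of $\XX_2$. Here Lemma~\ref{l:anti2} characterizes $\{1+2^n\IN_0:n\ge 3\}$ as the linearly ordered complement $\XX_2\setminus\min\XX_2$ of the set of minimal elements, and this chain is order-anti-isomorphic to $\IN$ (larger $n$ gives smaller set, hence strictly higher in the reverse-inclusion order). An order automorphism of $\XX_2$ must fix $\min\XX_2$ setwise and hence preserve the chain $\XX_2\setminus\min\XX_2$; being an automorphism of a chain of type $\omega$, it must be the identity, matching each $1+2^n\IN_0$ to itself for $n\ge 3$. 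The small cases $n=1,2$ would then be handled directly, either by continuity arguments or by noting that $1+2^n\IN_0=\overline{(1+2^n)^\IN}$ via Lemma~\ref{l:ind2} and tracking the image explicitly.

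The main obstacle I anticipate is the bookkeeping at the boundary between the order-theoretic reindexing and the concrete identification of sets. The lemmas cleanly characterize the relevant families only for large enough $n$ (e.g. $n\ge 3$ in Lemma~\ref{l:anti2}, and the ${\uparrow}$-chain description in Lemma~\ref{l:tree} starts at the smallest element $p-1$), so I must be careful that the order isomorphism's rank-matching genuinely forces $n\mapsto n$ rather than some shift, and that the finitely many small-index sets are pinned down separately. A subtle point is ensuring that the isomorphism type of the chain determines the correspondence uniquely; since both chains are copies of $(\IN,\le)$ with a least element, uniqueness of isomorphisms between well-ordered sets does the work, but I would state this explicitly to avoid an off-by-one error in the indexing.
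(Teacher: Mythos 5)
Your overall strategy coincides with the paper's: for odd $p$ the paper likewise identifies $\{1+p^n\IN_0:n\in\IN\}$ with the set of ${\uparrow}$-chain elements of $\XX_p$ (Lemma~\ref{l:tree}), notes that $h(p)$ is odd because $h(2)=2$, and uses the uniqueness of order isomorphisms between well-ordered chains; for $p=2$ it likewise reduces to an order automorphism of $\XX_2$ and observes that the chain $\XX_2\setminus\min\XX_2=\{1+2^n\IN_0:n\ge 3\}$ must be fixed pointwise. So for odd $p$ and for $p=2$, $n\ge 3$, your argument is essentially the paper's.

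The genuine gap is the case $p=2$, $n=2$, which you defer to being ``handled directly, either by continuity arguments or by \dots tracking the image explicitly.'' Neither suggestion works. The set $1+4\IN_0=\overline{5^\IN}$ is a \emph{minimal} element of $\XX_2$ (it is not contained in $1+8\IN_0$), so it is untouched by the chain argument; and it is order-theoretically indistinguishable from $\overline{3^\IN}=(1+8\IN_0)\cup(3+8\IN_0)$: by Lemma~\ref{l:ord2} both are minimal and both have strict upper set $\{1+2^m\IN_0:m\ge 3\}$, so the transposition exchanging them (and fixing everything else) is an order automorphism of the abstract poset $\XX_2$. Hence no amount of order-theoretic bookkeeping can force $h(\overline{5^\IN})=\overline{5^\IN}$. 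Tracking the image via Lemma~\ref{l:isomor} only gives $h(\overline{5^\IN})=\overline{h(5)^\IN}$ with $h(5)$ not yet known at this stage. The paper closes the case with an input from outside the poset: since the least non-minimal element $\overline{9^\IN}=1+8\IN_0$ is fixed, $h(\overline{5^\IN})$ must be one of the two elements $\overline{3^\IN},\overline{5^\IN}$ strictly below it, and since $h(3)=3$ by Lemma~\ref{l:3} we get $h(\overline{3^\IN})=\overline{3^\IN}$, whence injectivity forces $h(\overline{5^\IN})=\overline{5^\IN}=1+4\IN_0$. (The case $p=2$, $n=1$ really is immediate, since $1+2\IN_0=\IN\setminus 2\IN$ and $h(2\IN)=2\IN$ was established in the proof of Lemma~\ref{l:isomor}; note that $1+2\IN_0$ does not even belong to $\XX_2$.) This missing step is not cosmetic: the main theorem uses $h(1+4\IN_0)=1+4\IN_0$ whenever $l_2(n-1)=1$.
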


\begin{proof} By Lemma~\ref{l:isomor}, the homeomorphism $h$ induces an order isomorphism of the posets $\XX_p$ and $\XX_{h(p)}$.

If $p=2$, then $h(p)=2$ by Lemma~\ref{l:3}. Consequently, $h$ induces an order automorphism of the poset $\XX_2$ and hence $h$ is identity on the well-odered set $\{1+2^n\IN_0:n\ge 3\}$ of non-minimal elements of $\XX_2$. Consequently, $h(1+2^n\IN_0)=1+2^n\IN_0$ for all $n\ge 3$. 

Next, we show that $h(1+4\IN_0)=1+4\IN_0$. Observe that for the smallest non-minimal element $\overline{9^\IN}=1+8\IN_0$ of $\XX_2$ there are only two elements $\overline{5^\IN}=1+4\IN_0$ and $\overline{3^\IN}=(1+8\IN_0)\cup(3+8\IN_0)$, which are strictly smaller than $\overline{9^\IN}$ in the poset $\XX_2$. Then $h(\overline{5^\IN})\in\{\overline{3^\IN},\overline{5^\IN}\}$. By Lemma~\ref{l:3}, $h(3)=3$ and hence $h(\overline{3^\IN})=\overline{3^\IN}$, which implies that $h(1+4\IN_0)=h(\overline{5^\IN})=\overline{5^\IN}=1+4\IN_0$.

Now assume that $p$ is an odd prime number. Since $h(2)=2$, the prime number $h(p)\ne h(2)=2$ is odd. 
 By Lemma~\ref{l:tree}, the well-ordered sets $\{1+p^n\IN_0:n\in\IN\}$ and $\{1+h(p)^n\IN_0:n\in\IN\}$ coincide with the sets of ${\uparrow}$-chain elements of the posets $\XX_p$ and $\XX_{h(p)}$, respectively. Taking into account that $h$ is an order isomorphism, we conclude that $h(1+p^n\IN_0)=1+h(p)^n\IN_0$ for every $n\in\IN$. 
\end{proof}

\section{Proof of Theorem~\ref{t:main}}\label{s:main}

In this section we present the proof of Theorem~\ref{t:main}. Given any homeomorphism $h$ of the Golomb space $\IN_\tau$, we need to prove that $h(n)=n$ for all $n\in\IN$. This equality will be proved by induction.

For $n\le 3$ the equality $h(n)=n$ is proved in Lemma~\ref{l:3}. Assume that for some number $n\ge 4$ we have proved that $h(k)=k$ for all $k<n$. For every prime number $p$ let $\alpha_p $ be the largest integer number such that $p^{\alpha_p}$ divides $n-1$ (so, $\alpha_p=l_p(n-1)$). For every $p\in\Pi_{n-1}$ we have $p\le n-1$ and hence $h(p)=p$ (by the inductive hypothesis). Then $h(\Pi_{n-1})=\Pi_{n-1}$ and $h(\Pi\setminus \Pi_{n-1})=\Pi\setminus\Pi_{n-1}$.

Observe that $n$ is the unique element of the set $$\bigcap_{p\in\Pi}(1+p^{\alpha_p}\IN_0)\setminus(1+p^{\alpha_p+1}\IN_0).$$
By Lemma~\ref{l:main-p}, $h(n)$ coincides with the unique element of the set
$$
\begin{aligned}
&\bigcap_{p\in\Pi}(1+h(p)^{\alpha_p}\IN_0)\setminus(1+h(p)^{\alpha_p+1}\IN_0)=\\
&\Big(\bigcap_{p\in\Pi_{n-1}}\!\!(1{+}h(p)^{\alpha_p}\IN_0)\setminus(1{+}h(p)^{\alpha_p{+}1}\IN_0)\Big)\cap\Big(\bigcap_{p\in\Pi\setminus \Pi_{n-1}}\!\!\IN\setminus(1{+}h(p)\IN_0)\Big) =\\
&\Big(\bigcap_{p\in\Pi_{n-1}}(1+p^{\alpha_p}\IN_0)\setminus(1+p^{\alpha_p+1}\IN_0)\Big)\cap\Big(\bigcap_{p\in\Pi\setminus \Pi_{n-1}}\IN\setminus(1+p\IN_0)\Big)=\{n\}
\end{aligned}
$$and hence $h(n)=n$.

\end{document}